\newcommand{\N}{\mathbb{N}}
\newcommand{\R}{\mathbb{R}}
\newcommand{\C}{\mathbb{C}}
\newcommand{\s}{\mathbb{S}}
\newcommand{\h}{\mathbb{H}}
\newcommand{\E}{\mathbb{E}}
\newcommand{\Sl}{\mathrm{Sl}_2(\R)}
\newcommand{\Nil}{\mathrm{Nil}_3}
\newcommand{\pRe}{\mathrm{Re}}
\newcommand{\df}{\,\mathrm{d}}
\newcommand{\prodesc}[2]{\left\langle #1, #2 \right \rangle}
\newcommand{\abs}[1]{\left\lvert #1 \right\rvert}
\newtheorem{theorem}{Theorem}
\newtheorem*{theorem*}{Theorem}
\newtheorem{proposition}{Proposition}
\newtheorem{corollary}{Corollary}
\newtheorem{lemma}{Lemma}
\theoremstyle{definition}
\theoremstyle{remark}
  \newtheorem{remark}{Remark}
\numberwithin{equation}{section}
\title[New examples of CMC surfaces in $\s^2 \times \R$ and $\h^2 \times \R$]{New examples of constant mean curvature surfaces in $\s^2 \times \R$ and $\h^2 \times \R$}
\author{José M. Manzano}
\address{Departamento de Geometr\'{\i}a  y Topolog\'{\i}a \\
Universidad de Granada \\
18071 Granada, SPAIN} 
\email{jmmanzano@ugr.es}
\author{Francisco Torralbo}
\address{Departamento de Geometr\'{\i}a  y Topolog\'{\i}a \\
Universidad de Granada \\
18071 Granada, SPAIN} 
\email{ftorralbo@ugr.es}
\thanks{Research partially supported by the MCyT-Feder research project MTM2007-61775, the Junta Andalucía Grants P06-FQM-01642 and P09-FQM-4496 and the \textsc{genil} research project PYR-2010-21}
\subjclass[2000]{Primary 53C42; Secondary 53C30}
\keywords{Surfaces, minimal, constant mean curvature, homogeneous 3-manifolds, Berger spheres, product spaces}
\begin{document}

\begin{abstract}
We construct non-zero constant mean curvature $H$ surfaces in the product spaces $\s^2 \times \R$ and $\h^2\times \R$ by using suitable conjugate Plateau constructions. The resulting surfaces are complete, have bounded height, and are invariant under a discrete group of horizontal translations. A $1$-parameter family of unduloid-type surfaces is produced in $\s^2\times\R$ for any $H>0$ (some of which are compact), and in $\h^2\times\R$ for any $H>1/2$ (which are shown to be properly embedded bigraphs). Finally, we give a different construction in $\h^2 \times \R$ for $H=1/2$ giving surfaces with the symmetries of a tessellation of $\h^2$ by regular polygons. 
\end{abstract}

\maketitle

\section{Introduction}

In 1970, Lawson~\cite{Lawson70} established a celebrated correspondence between simply connected minimal surfaces in a space form $M^3(\kappa)$ (with constant curvature $\kappa$) and constant mean curvature (\textsc{cmc}) $H$ surfaces  in the space $M^3(\kappa - H^2)$. This result motivated the construction of two doubly periodic constant mean curvature one surfaces in the Euclidean 3-space. The procedure used to obtain such examples is known as the \emph{conjugate Plateau construction} and has become a fruitful method to obtain constant mean curvature surfaces in space forms (e.g., see \cite{KPS88, Karcher89, GB93, Polthier94}). We summarize the steps of this construction as follows:
\begin{enumerate}
	\item Solve the Plateau problem in a geodesic polygon in $M^3(\kappa)$.
	\item Consider the \emph{conjugate} \textsc{cmc} $H$  surface in $M^3(\kappa - H^2)$, whose boundary lies on some planes of symmetry, since the initial surface is bounded by geodesic curves (cf.~\cite[Section 1]{Karcher89}).
	\item Reflect the resulting surface across its edges to get a complete constant mean curvature $H$ surface in $M^3(\kappa - H^2)$.
 \end{enumerate} 

The key property of this method is that a \emph{geodesic curvature line in the initial surface becomes a planar line of symmetry in the conjugate one}. This is crucial in order to extend by reflection the conjugate piece to a complete constant mean curvature surface. Hence, it is important to cleverly choose the appropriate geodesic polygon once the desired symmetries in the target surface have been fixed. Sometimes it is not possible to explicitly determine the conjugate surface and continuity or degree arguments come in handy (cf.~\cite{KPS88, Polthier94}). 

Nevertheless, one of the main drawbacks of the conjugate Plateau construction is that the produced surfaces are hardly ever known to be embedded, since the correspondence consists in integrating geometric data. Hence, embeddedness has to be proven separately and it becomes a difficult task. Some useful results are the Krust's conjugate graph theorem~\cite[vol.~I, p.~118]{DHKW} and its generalizations to $M\times\R$ ($M$ being a non-positive constant Gaussian curvature surface, see~\cite{HST}), and to homogeneous Riemannian $3$-manifolds (see~\cite{CH13}). These results ensure that the conjugate surface is embedded provided that the initial surface is a graph over a convex domain (see~\cite{Karcher89} for several applications). In most of the cases (e.g., the double periodic \textsc{cmc} surface of $\R^3$ contained in a slab constructed by Lawson~\cite{Lawson70} and generalized by Karcher~\cite{Karcher89}, see also~\cite{GB93} and \cite[\S 4]{GB2005}), the examples are broadly supposed to be embedded but this has hitherto lacked of a proof as far as we know. 

In the last few years the study of constant mean curvature surfaces in the homogeneous Riemannian $3$-manifolds has become an active research topic (see, for example, \cite{DHM09} for a survey on recent results). Daniel~\cite{Daniel07} established a Lawson-type correspondence between constant mean curvature surfaces in homogeneous Riemannian 3-manifolds with isometry group of dimension $4$ (see Section~\ref{subsec:Daniel-correspondence}) that gives rise to an extension of the conjugate Plateau construction to this class of 3-manifolds. It also generalizes the correspondence by Hauswirth, Sa Earp and Toubiana \cite{HST} between minimal surfaces in the product case, where some authors have made their contributions~\cite{MR,R,MRR}.

This paper has a double aim: on the one hand to extend the conjugate Plateau construction to the homogeneous Riemannian 3-manifolds by using the Daniel correspondence and, on the other hand, to obtain, applying this procedure, new constant mean curvature surfaces in the product spaces $\s^2\times \R$ and $\h^2 \times \R$, where $\s^2$ and $\h^2$ stand for the sphere and the hyperbolic plane with curvature one and minus one, respectively.

Section~\ref{sec:preliminaries} introduces the Daniel correspondence, and studies how curves and symmetries in the corresponding surfaces are related. We will realize that it is important to deal with polygons made of vertical and horizontal geodesics, and also that the phase angle $\theta$ of the Daniel correspondence is equal to $\pi/2$, because this will be the case where we will be able to handle the geometry of the conjugate surface. It will turn out that the target space must be a Riemannian product manifold, which makes sense since they are the only $\mathbb E(\kappa,\tau)$-spaces admitting totally geodesic surfaces~\cite{ST09} and thus enabling mirror symmetries (cf.\ Lemmas~\ref{lm:simetrias-productos} and~\ref{lm:simetrias-correspondencia-daniel}). All these features make the choice of the initial geodesic polygon more rigid, so the needed arguments become more subtle than in the Lawson setting. We will finish Section~\ref{sec:preliminaries} by discussing the smooth extension of the Plateau solution by reflection across its border. 

In Section~\ref{sec:homogeneous-spaces} we include a brief description of those homogeneous spaces involved in the construction, as well as their needed properties.

Section~\ref{sec:spherical-helicoids} is devoted to the first non-trivial examples of conjugate Plateau construction, showing that the $1$-parameter family of spherical helicoids in the Berger spheres (which are surfaces ruled by horizontal geodesic and invariant by a screw motion) correspond to the rotationally invariant unduloids and nodoids \cite{HH89, PR99} in $\s^2 \times \R$ and $\h^2 \times \R$ (cf.\ Proposition~\ref{prop:spherical-helicoids}), except for a special case which correspond to a torus in $\s^2\times\R$ or a cylinder in $\h^2\times\R$, both of them invariant by a $1$-parameter group of horizontal isometries.

Section~\ref{sec:cmc-surfaces-products} deals with the construction of a 1-parameter family of complete singly periodic \textsc{cmc} $H$ surfaces in the product spaces $\s^2 \times \R$ ($H>0$) and $\h^2 \times \R$ ($H > 1/2$), coming from minimal surfaces in Berger spheres. The surfaces we obtain are extended by reflection to complete ones which are invariant by discrete $1$-parameter groups of isometries, consisting of rotations in $\s^2\times\R$ or hyperbolic translations in $\h^2\times\R$ (cf.\ Theorem~\ref{thm:unduloids}). In the case of $\h^2\times \R$ the constructed surfaces are proved to be embedded. The case of $\s^2 \times \R$ is more subtle due to the compactness of $\s^2$, and we will be able to show that for $H < 1/2$ there are non-embedded ones. Nonetheless, this family provides \textsc{cmc} $H$ tori for each $H>0$. In both cases, we will give a quite precise description of them. An interesting property of these new examples is that their height is bounded and takes all values in between the height of the rotationally invariant sphere and the height of the rotationally invariant torus in $\s^2\times\R$ (resp.\ cylinder invariant under hyperbolic translations in $\h^2\times\R$). Note that the height of the aforementioned torus (resp.\  cylinder) is a half of that of the corresponding sphere.

Finally, in Section~\ref{sec:cmc-1/2-H2xR} we will construct \textsc{cmc}  $1/2$ bi-multigraphs in $\h^2\times\R$ which have the symmetries of a tessellation of $\h^2$ by regular polygons (cf.\ Theorem~\ref{thm:tiling-surface}), coming from minimal surfaces in the Heisenberg group.  Recall that the value $H=1/2$ is critical in the sense that \textsc{cmc} surfaces for $H > 1/2$ and $H \leq 1/2$ are of different nature (e.g.\ \textsc{cmc} spheres only exist for $H > 1/2$). Besides, we give some applications to the construction of \textsc{cmc} $1/2$ surfaces in $M\times\R$, where $M$ is a compact surface with negative Euler characteristic and constant curvature $-1$. 

We want to mention that \textsc{cmc} $0<H<1/2$ surfaces in $\h^2 \times \R$ can be obtained by the conjugate construction from minimal surfaces in the simplectic group $\Sl$ endowed with an appropriate homogeneous metric, though the analysis of such surfaces is beyond the goal of this paper.

The authors wish to thank professors Joaquín Pérez, Antonio Ros and Francisco Urbano for their comments during the preparation of this paper. We also would like to thank the referee for the thorough revision of the manuscript and the inestimable suggestions to improve it.

\section{Preliminaries on homogeneous $3$-manifolds}\label{sec:preliminaries}

Simply-connected homogeneous Riemannian $3$-manifolds with isometry group of dimension $4$ or $6$, different from the hyperbolic space $\h^3$, form a $2$-parameter family $\E(\kappa,\tau)$, $\kappa,\tau\in\R$ (see~\cite{Daniel07}). Moreover, every $\E(\kappa, \tau)$ admits a fibration over the simply-connected constant curvature $\kappa$ surface whose vertical field $\xi$ is Killing and $\tau$ represents the bundle curvature. In particular, $\E(1, 0)=\s^2\times\R$ and $\E(-1,0)=\h^2 \times \R$.

We will say that a geodesic in $\E(\kappa, \tau)$ is \emph{horizontal} if its tangent vector is orthogonal to $\xi$ and \emph{vertical} if its tangent vector is co-linear with $\xi$. For our purposes, the following property will be essential:
\begin{quote}
\itshape
Given a vertical or horizontal geodesic, there exists a unique involutive isometry of $\E(\kappa,\tau)$ which fixes each point in the geodesic. It will be called a \emph{geodesic reflection} with respect to the geodesic.
\end{quote}

\subsection{The Daniel correspondence}\label{subsec:Daniel-correspondence}

Lawson correspondence~\cite[Section 14]{Lawson70} was generalized by Daniel~\cite[Theorem 5.2]{Daniel07} to the context of $\E(\kappa, \tau)$-spaces. More explicitly, given $\E=\E(\kappa,\tau)$ and $\E^*=\E(\kappa^*,\tau^*)$, such that $\kappa-4\tau^2=\kappa^*-4(\tau^*)^2$ and given $\theta,H,H^*\in\R$ satisfying $H+i\tau=e^{i\theta}(H^*+i\tau^*)$, the following statement holds:
\begin{quote}
\itshape
Let $\phi:\Sigma\rightarrow\E$ an isometric constant mean curvature $H$ immersion of a simply connected surface $\Sigma$. There exists a isometric immersion $\phi^*:\Sigma\rightarrow\E^*$ of \textsc{cmc} $H^*$ such that:
\begin{enumerate}[(a)]
 \item $\nu^*=\nu$
 \item $T^*=e^{\theta J}T$.
 \item $S^*=e^{\theta J}(S-H\cdot\mathrm{id})+H^*\cdot\mathrm{id}$
\end{enumerate}
where $e^{\theta J}$, $\nu=\langle N,\xi\rangle$, $T=\xi-\nu N$ and $S$ are the positive oriented rotation of angle $\theta$ in the tangent plane to $\Sigma$, the angle function, the tangent part of the vertical field and the shape operator for $\phi$, and $N$ is a unit normal vector field to the immersion. The elements $\nu^*$, $T^*$ and $S^*$ are the corresponding ones for $\phi^*$.

The immersion $\phi^*$ is unique up to an ambient isometry in $\E^*$ and is called a \emph{sister immersion} of $\phi$.
\end{quote}

We recall that we are interested in applying the correspondence between a minimal surface in some $\E(\kappa,\tau)$ and a \textsc{cmc} $H$ surface in $M^2(\epsilon)\times\R=\mathbb{E}(\epsilon,0)$, so the parameter $\theta$ must be $\frac{\pi}{2}$ (there is no loss of generality in considering $\theta$ to be positive). Hence, $(\kappa,\tau)$ must be equal to $(4H^2 + \epsilon,H)$, and this leads to the relations
\begin{align}\label{eq:daniel}
\nu^*&=\nu,&T^*&=JT,&S^*&=JS+H\cdot\mathrm{id},
\end{align}
between the sister surfaces. The possible choice of parameters is given by Figure~\ref{fig:table}, as well as, for $H=\tau=0$, the families of associate minimal surfaces in $\s^2\times\R$ or $\h^2\times\R$ (notice that $\theta$ is free in that case).

\begin{figure}
\begin{center}
\renewcommand{\arraystretch}{1.3}
\begin{tabular}{c|ccc}
 &Initial $\E(\kappa,\tau)$& Surfaces in $\h^2\times\R$ & Surfaces in $\s^2\times\R$\\\hline
 $\kappa>0\ $&$\E(4H^2+\epsilon,H)$& \textsc{cmc} $H>1/2$&\textsc{cmc} $H>0$\\
 $\kappa=0\ $&$\E(0,1/2)$& \textsc{cmc} $H=1/2$& -\\
 $\kappa<0\ $&$\E(4H^2-1,H)$& \textsc{cmc} $0<H<1/2$&-
\end{tabular}
\end{center}
\caption{Possible configurations of parameters.}\label{fig:table}
\end{figure} 

We will now precise what the horizontal and vertical geodesics contained in a minimal surface become in the sister surface, for the cases in Figure~\ref{fig:table}. Notice that the choice $\theta=\frac{\pi}{2}$ is instrumental in the proof.

\begin{lemma}\label{lm:simetrias-productos}
Given $\epsilon\in\{-1,1\}$ and $H \geq 0$, let $\phi:\Sigma\looparrowright \mathbb{E}(4H^2 + \epsilon,H)$ be a isometric minimal immersion of a simply connected Riemannian surface $\Sigma$ and suppose $\phi^*:\Sigma\looparrowright M^2(\epsilon)\times\R$ is its sister \textsc{cmc} H immersion. Given a smooth curve $\alpha:[a,b]\rightarrow\Sigma$,
\begin{enumerate}[(a)]
 \item if $\phi(\alpha)$ is a horizontal geodesic, then $\phi^*(\alpha)$ is contained in a vertical plane, which the immersion meets orthogonally.
 \item If $\phi(\alpha)$ is a vertical geodesic, then $\phi^*(\alpha)$ is contained in a horizontal plane, which the immersion meets orthogonally.
\end{enumerate}
\end{lemma}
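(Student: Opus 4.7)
The plan is to exhibit, in each case, a unit vector field along $\beta:=\phi^*\circ\alpha$ in $M^2(\epsilon)\times\R$ that is parallel in the ambient connection and plays the role of either the normal (case (a)) or a tangent direction (case (b)) to the desired totally geodesic plane of symmetry. The tools are the three Daniel identities $\nu^*=\nu$, $T^*=JT$, $S^*=JS+H\cdot\mathrm{id}$ of \eqref{eq:daniel}, together with the Gauss formula $\nabla^{M^2\times\R}_{d\phi^*(X)}d\phi^*(Y)=d\phi^*(\nabla^\Sigma_X Y)+\langle S^*X,Y\rangle N^*$.

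For part (a), I would first unpack the hypothesis along $\alpha$: horizontality of $\alpha'$ with respect to $\xi=T+\nu N$ forces $\langle\alpha',T\rangle=0$, and splitting $\nabla^{\mathbb{E}}_{\alpha'}\alpha'=0$ into its tangential and normal parts yields $\nabla^\Sigma_{\alpha'}\alpha'=0$ (so $\alpha$ is intrinsically a geodesic) and $\langle S\alpha',\alpha'\rangle=0$ (the asymptotic condition). Then I would propose $n^*:=d\phi^*(J\alpha')$ as the candidate normal to the desired vertical plane, and verify: (i) $n^*$ is horizontal, since $\langle J\alpha',T^*\rangle=\langle J\alpha',JT\rangle=\langle\alpha',T\rangle=0$; (ii) $n^*\perp\beta'$ and $n^*\perp N^*$ hold trivially; and (iii) $n^*$ is parallel along $\beta$ in $M^2(\epsilon)\times\R$. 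Step (iii) is the only nontrivial check: in the Gauss formula, the tangential summand of $\nabla^{M^2\times\R}_{\beta'}n^*$ is $d\phi^*(J\nabla^\Sigma_{\alpha'}\alpha')=0$ since $J$ is parallel on the oriented surface $\Sigma$, while the normal summand is $\langle S^*\alpha',J\alpha'\rangle=\langle JS\alpha'+H\alpha',J\alpha'\rangle=\langle S\alpha',\alpha'\rangle=0$, using $S^*=JS+H\cdot\mathrm{id}$ and the asymptotic condition. Because $M^2(\epsilon)\times\R$ is a Riemannian product, the parallel horizontal unit field $n^*$ perpendicular to $\beta'$ forces the projection of $\beta$ to $M^2(\epsilon)$ to be a pregeodesic, so $\beta$ lies in a vertical totally geodesic plane, which $\phi^*$ meets orthogonally by $N^*\perp n^*$.

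For part (b), the vertical-geodesic hypothesis forces $\alpha'$ to be colinear with $\xi$, and since $\alpha'\in T\Sigma$ and $\xi=T+\nu N$ this requires $\nu=0$ and $\alpha'=\pm T$ along $\alpha$. No parallel-transport computation is needed here: $\nu^*=\nu=0$ says $N^*$ is horizontal and hence tangent to every horizontal slice, while $\langle\beta',\xi^*\rangle=\langle\alpha',T^*\rangle=\langle\alpha',JT\rangle=\langle\alpha',J\alpha'\rangle=0$ shows the height function is constant along $\beta$, so $\beta$ lies in a single horizontal slice which $\phi^*$ meets orthogonally.

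The main obstacle is the Gauss-formula check in (a), and the whole argument visibly depends on $\theta=\pi/2$: the rotation by $J$ appearing in both $T^*=JT$ and $S^*=JS+H\cdot\mathrm{id}$ is exactly what makes $n^*=d\phi^*(J\alpha')$ horizontal and what collapses $\langle S^*\alpha',J\alpha'\rangle$ to $\langle S\alpha',\alpha'\rangle$. For any other value of $\theta$ neither equality would hold, matching the rigidity phenomenon announced in the introduction.
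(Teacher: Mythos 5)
Your proof is correct and follows essentially the same route as the paper's: both hinge on exhibiting (the push-forward of) $J\alpha'$ as a parallel horizontal unit normal to the sought vertical plane, via the geodesic, asymptotic and horizontality conditions combined with $T^*=JT$ and $S^*=JS+H\cdot\mathrm{id}$, and part (b) is verbatim the paper's argument. The only difference is presentational: the paper embeds $M^2(\epsilon)\times\R$ into $\R^3\times\R$ (or $\R^3_1\times\R$) and shows that $J\gamma_*'$ is a constant vector $(v,0)$ of the ambient vector space, whereas you run the same computation intrinsically through the Gauss formula and then invoke the Riemannian product structure to identify the plane.
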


\begin{proof}
The first part of this lemma was proved by Torralbo (cf.\ \cite[Proposition 3]{Tor10a}) but, for completeness, we include the proof here.
 
We will follow the above notation and consider $\gamma=\phi(\alpha)$ and $\gamma_*=\phi^*(\alpha)=(\beta, h) \subseteq \Sigma \subseteq M^2(\epsilon) \times \R$, where there is no loss of generality in considering $\alpha$ to be parametrized by its arc length. Moreover, it is possible to immerse isometrically  $M^2(\epsilon) \times \R$ in $\subseteq\R^3\times\R$ if $\epsilon=1$ or $\R^3_1 \times \R$ if $\epsilon = -1$ with unit normal along $\gamma$ given by $(\beta, 0)$. 

We will start by proving item (a). We claim that $J\gamma_*'$ is constant, where $J\gamma_*'$ is considered to be a curve in $\R^4$ or $\R^4_1$. 
\[
\begin{split}
\prodesc{(J\gamma_*')'}{\gamma_*'} = -\prodesc{J\gamma'}{\nabla_{\gamma'}\gamma'} &= 0, \qquad \text{(as $\gamma$ is a geodesic)} \\
\prodesc{(J\gamma_*')'}{J\gamma_*'} &= 0, \qquad \text{(as $J\gamma_*'$ has length $1$)} \\
\prodesc{(J\gamma'_*)'}{N^*} = -\prodesc{J\gamma_*'}{\df N^*(\gamma'_*)} &= \prodesc{J\gamma'_*}{S^*\gamma'_*} =   \prodesc{J\gamma'}{-S\gamma' + \tau \gamma'} = \\
=-\prodesc{\gamma'}{S\gamma'} &= 0, \qquad \text{(as $\gamma$ is an asymptote line)} \\
\end{split}
\]
where we take into account the relation~\eqref{eq:daniel}. Thus, the tangent part of $(J\gamma_*')'$ to $M^2(\epsilon)\times\R$ vanishes, so $(J\gamma_*')'$ is proportional to $(\beta, 0)$. On the other hand, as $\gamma$ is an horizontal curve in $\mathbb E(4H^2 + \epsilon,H)$, we know that $\prodesc{\gamma'}{\xi} = \prodesc{\gamma'}{T} = 0$ so $\gamma'$ is proportional to $JT$ and, since it has unit length, we can suppose that, up to a sign, $\gamma' = JT/\sqrt{1-\nu^2}$. Therefore, $T^*=\gamma'_*\sqrt{1-\nu^2}$ and $(0, 1) =T^*+\nu N^*= \sqrt{1 - \nu^2}\gamma_*' + \nu N^*$.

This last relation implies that $\prodesc{J\gamma_*'}{(0, 1)} = 0$. Hence,
\[
\prodesc{(J\gamma_*')'}{(\beta, 0)} = -\prodesc{J\gamma_*'}{(\beta', 0)} = h'\prodesc{J\gamma_*'}{(0, 1)} = 0,
\]
where we have used that $0 = \prodesc{J\gamma_*'}{\gamma_*'} = \prodesc{J\gamma_*'}{(\beta', 0)} + \prodesc{J\gamma_*'}{h'(0,1)}$, and the claim is proved. 

In fact, we have proved that $J\gamma_*' = (v, 0) \in \R^3 \times \R$ for some fixed $v\in TM^2(\epsilon)\subset\R^3$. Taking this into account, 
\[\prodesc{\gamma_*}{(v,0)}'= \prodesc{\gamma_*'}{(v,0)} = \prodesc{\gamma_*'}{J\gamma_*'} = 0\]
which implies that $\prodesc{\gamma_*}{(v,0)}$ is constant, but $
\prodesc{\gamma_*}{(v,0)} = \prodesc{\beta}{v} = 0$ as $\beta$ is normal to $M^2(\epsilon)$ and $v$ is tangent. 

All this information says that $\gamma_*$ lies in the vertical plane 
$P= \{(p, t) \in M^2(\epsilon) \times \R:\, \prodesc{p}{v} = 0\}$. Moreover, the immersion $\phi_*$ is orthogonal to $P$ since the tangent plane along $\gamma_*$ is spanned by $\{\gamma_*', J\gamma_*' = (v,0)\}$.

Let us now prove item (b). Observe first that, if $\gamma$ is a vertical geodesic, then $\nu=0$ along it, so $\xi^*=T^*+\nu N^*=T^*$ along $\gamma_*$. Thus, 
\[\prodesc{\gamma_*'}{\xi^*}=\prodesc{\gamma_*'}{T^*}=\prodesc{\gamma'}{JT}=0.\]
The last equality follows from the fact that $\gamma'$ is vertical whereas $JT$ is horizontal ($T$ is vertical along $\gamma$ since $\gamma$ is a vertical curve contained in the surface). Finally, notice that $0=\prodesc{\gamma_*'}{\xi^*}=\prodesc{(\beta',h')}{(0,1)}=h'$, so $h$ is constant along $\gamma_*$, i.e.\  this curve is contained in a horizontal slice. As $\nu=0$ along $\gamma_*$ the surface meets that slice orthogonally.
\end{proof}

\subsection{Smooth extension of surfaces bordered by geodesics}\label{subsec:smooth-extension}

Let $\Sigma$ be a minimal surface immersed in $\E(\kappa, \tau)$ and $\gamma$ a vertical or horizontal geodesic of $\E(\kappa, \tau)$ contained in $\partial \Sigma$. Then, it is possible to extend $\Sigma$ by geodesic reflection around $\gamma$ (we recall that every geodesic reflection, whenever the geodesic is horizontal or vertical, is an isometry of $\E(\kappa, \tau)$). This extension is smooth in view of~\cite[vol.~II, Theorem 4]{DHKW}.

Moreover, taking into account Lemma~\ref{lm:simetrias-productos} and the data $(\nu, T, S)$ of a minimal surface invariant with respect to either a vertical or horizontal geodesic reflection, it is easy to prove the following result that establishes the behaviour of this symmetries with respect to the Daniel correspondence.

\begin{lemma}\label{lm:simetrias-correspondencia-daniel}
Let $\phi: \Sigma \looparrowright \E(4H^2 + \epsilon, H)$ be a minimal immersion of a simply connected surface and $\phi^*:\Sigma \looparrowright M^2(\epsilon) \times \R$ its sister immersion. Then:
\begin{enumerate}[(i)]
	\item If $\phi(\Sigma)$ is invariant by a horizontal (resp.\ vertical) geodesic reflection then $\phi^*(\Sigma)$ is invariant by a reflection over a vertical (resp.\ horizontal) plane.
	\item The axis of reflection in the original surface corresponds to the curve where the sister immersion meets de plane of reflection.
\end{enumerate}
\end{lemma}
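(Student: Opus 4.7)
Assume $\phi(\Sigma)$ is invariant under a horizontal geodesic reflection $R$ of $\E(4H^2+\epsilon,H)$ about a geodesic $\gamma\subset\phi(\Sigma)$; the vertical case is identical. Write $\alpha=\phi^{-1}(\gamma)$. The invariance lifts to a unique involutive, orientation-reversing isometry $f\colon\Sigma\to\Sigma$ fixing $\alpha$ pointwise and satisfying $R\circ\phi=\phi\circ f$. By Lemma~\ref{lm:simetrias-productos}(a), $\phi^*(\alpha)$ lies in a vertical plane $P\subset M^2(\epsilon)\times\R$, and $\phi^*(\Sigma)$ meets $P$ orthogonally along $\phi^*(\alpha)$. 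Let $\bar R$ denote the ambient mirror reflection of $M^2(\epsilon)\times\R$ across $P$.

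The heart of the argument is the identity
\[
\phi^*\circ f \;=\; \bar R\circ\phi^*
\]
of immersions of $\Sigma$ into $M^2(\epsilon)\times\R$, which yields (i) and (ii) simultaneously. I would prove it by showing that both sides coincide to first order along $\alpha$ and then invoking the uniqueness of \textsc{cmc} $H$ immersions with prescribed Cauchy data along a regular curve (a Bj\"orling-type assertion, valid because the \textsc{cmc} equation is real-analytic). On $\alpha$ itself both maps equal $\phi^*$: the left-hand side because $f|_\alpha=\mathrm{id}$, the right-hand side because $\bar R|_P=\mathrm{id}$. For the differentials at a point $p\in\alpha$, $df_p$ is the orthogonal reflection of $T_p\Sigma$ fixing $T_p\alpha$, this being the general description of the differential of an involutive orientation-reversing isometry along its fixed curve. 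On the other hand, Lemma~\ref{lm:simetrias-productos} gives $d\phi^*_p(T_p\Sigma)=T_{\phi^*(p)}\phi^*(\alpha)\oplus N_{\phi^*(p)}P$, on which $d\bar R_{\phi^*(p)}$ acts as the reflection fixing $T_{\phi^*(p)}\phi^*(\alpha)$ and reversing the normal to $P$. Comparing these two pictures yields $d(\bar R\circ\phi^*)_p=d\phi^*_p\circ df_p=d(\phi^*\circ f)_p$, as required.

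Granted this identity, (i) is immediate; (ii) follows because the fixed set of $f$ on $\Sigma$ is $\alpha$, while the fixed set of $\bar R$ on $\phi^*(\Sigma)$ is precisely $\phi^*(\alpha)=\phi^*(\Sigma)\cap P$. The main obstacle is the tangent plane computation above, which leans on both the orthogonality in Lemma~\ref{lm:simetrias-productos} and on the description of $f$ as the geodesic reflection of $\Sigma$ about $\alpha$; everything else is either a formal consequence or a standard real-analytic continuation argument (equivalently, the Schwarz reflection principle for \textsc{cmc} $H$ surfaces orthogonal to planes of symmetry, in the spirit of Section~\ref{subsec:smooth-extension}). The case of vertical geodesic reflections is handled by the same proof with $P$ now a horizontal plane, invoking Lemma~\ref{lm:simetrias-productos}(b) instead of (a).
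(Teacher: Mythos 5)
Your argument is correct, but it takes a genuinely different route from the one the paper sketches. The paper offers no written proof; its one-line hint points to computing how the fundamental data $(\nu,T,S)$ of the minimal immersion transforms under the geodesic reflection, transferring this through the relations \eqref{eq:daniel}, and then invoking the uniqueness clause of the Daniel correspondence (an immersion is determined by its data up to an ambient isometry), with Lemma~\ref{lm:simetrias-productos} used only at the end to identify that ambient isometry as the reflection across the plane $P$. You instead bypass the fundamental data entirely: you reduce everything to the identity $\phi^*\circ f=\bar R\circ\phi^*$, verify it at the level of $1$-jets along the fixed curve (where the orthogonality statement of Lemma~\ref{lm:simetrias-productos} is exactly what makes $d\bar R$ and $df$ match under $d\phi^*$), and conclude by unique continuation for the \textsc{cmc} $H$ system with Cauchy data along a non-characteristic curve. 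This is legitimate --- it is the same analytic mechanism (Aronszajn/Schwarz reflection) that the paper itself invokes in Sections~\ref{subsec:smooth-extension} and \ref{subsec:product-spaces} --- and it has the advantage of proving (i) and (ii) in one stroke without computing how $(\nu,T,S)$ behaves under reflections. What it costs is the extra analytic input (real-analyticity of \textsc{cmc} immersions, or an Aronszajn-type theorem, to pass from first-order agreement along $\alpha$ to agreement on all of the connected surface $\Sigma$), whereas the paper's route only uses the uniqueness already built into the statement of the correspondence. Two small points are left at the same level of informality as the paper: the existence of the lift $f$ when $\phi$ is merely an immersion whose image contains the axis, and the orientation bookkeeping ensuring that $\phi^*\circ f$ and $\bar R\circ\phi^*$ carry normal fields that agree along $\alpha$ and give both immersions mean curvature $H$ with the same sign; both are routine here but worth a sentence.
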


Another interesting situation that will often appear is when $\partial\Sigma$ contains two different vertical or horizontal geodesics meeting at some point $p\in\partial\Sigma$. Then, the surface can be extended by reflection over both geodesics producing, in each step, a new vertical or horizontal geodesic passing through $p$. If the angle between the two geodesics is $\frac{\pi}{k}$ for some $k \in \N$, then a surface is produced after $2k$ reflections, which is smooth in every point except possibly at $p$. Nevertheless, if such a surface is locally embedded around the point $p$ then, thanks to the removable singularity result~\cite[Proposition 1]{CS85}, it will be also smooth at $p$.

\section{Models for homogeneous spaces}\label{sec:homogeneous-spaces}
For our purposes we will restrict ourselves to the construction of minimal surfaces in $\E(\kappa,\tau)$ spaces where $\tau\neq0$. In what follows we will only consider the cases where $\kappa\geq 0$ (i.e.\ either the Berger spheres or the Heisenberg group) so \textsc{cmc} $H > 0$ in $\s^2 \times \R$ and $H \geq 1/2$ in $\h^2 \times \R$ will be produced. In this section we will introduce briefly the aforementioned homogeneous spaces, focusing on the properties needed in the paper.

\subsection{The Berger spheres}
A Berger sphere is a $3$-sphere $\s^3 = \{(z, w) \in \C^2:\, \abs{z}^2 + \abs{w}^2 = 1\}$ endowed with the metric
\[
 g(X, Y) = \frac{4}{\kappa}\left[\prodesc{X}{Y} + \left(\frac{4\tau^2}{\kappa} - 1\right)\prodesc{X}{V}\prodesc{Y}{V} \right],
\]
where $\prodesc{\,}{\,}$ stands for the usual metric on the sphere, $V_{(z, w)} = J(z, w) = (iz, iw)$, for each $(z, w) \in \s^3$ and $\kappa$, $\tau$ are real numbers with $\kappa > 0$ and $\tau \neq 0$. From now on, we will denote the Berger sphere $(\s^3,g)$ as $\s^3_b(\kappa, \tau)$. We note that if $\kappa = 4\tau^2$ then $\s^3_b(\kappa, \tau)$ is, up to homotheties, the round sphere. The Berger spheres are examples of $\E(\kappa, \tau)$ for $\kappa > 0$ and $\tau \neq 0$ (cf.\ \cite{Tor10b} for a detailed description).

The Hopf fibration $\Pi: \s^3_b(\kappa, \tau) \rightarrow \s^2(\kappa)$, where $\s^2(\kappa)$ stands for the $2$-sphere of radius $1/\sqrt{\kappa}$, given by
\[
\Pi(z, w) = \frac{2}{\sqrt{\kappa}}\left(z\bar{w}, \frac{1}{2}(\abs{z}^2 - \abs{w}^2) \right),
\]  
is a Riemannian submersion whose fibers are geodesics. The vertical unit Killing field is given by $\xi = \frac{\kappa}{4\tau}V$.  It is easy to check that both the horizontal and vertical geodesic are great circles. It is interesting to remark that the length of every vertical geodesic is $8\tau\pi/\kappa$, whereas the length of every horizontal geodesic is $4\pi/\sqrt{\kappa}$.

\subsection{The product spaces}\label{subsec:product-spaces}
As it has been pointed out before, the only homogeneous spaces with isometry group of dimension four and zero bundle curvature are the Riemannian products $M^2(\kappa)\times\R=\E(\kappa,0)$, where $M^2(\kappa)$ stands for the simply connected surface with constant curvature $\kappa$. The Riemannian submersion coincides with the natural projection $\Pi:M^2(\kappa)\times\R\to M^2(\kappa)$.

Totally geodesic surfaces of $M^2(\kappa) \times \R$ are either \emph{vertical planes}, i.e.\ the product of a geodesic of $M^2(\kappa)$ with the real line (they are topological cylinders if $\kappa>0$ and planes if $\kappa<0$) or \emph{horizontal planes} (also called \emph{slices}), i.e.\ $M^2(\kappa) \times \{t_0\}$, $t_0 \in \R$. It is well known that the reflection over a horizontal or vertical plane is an ambient isometry.

If a \textsc{cmc} surface $\Sigma$ meets a horizontal or vertical plane orthogonally, we can smoothly extend this surface by reflecting over this plane. This is a consequence of the continuation result of Aronszajn~\cite{Aron57} for elliptic \textsc{pde}'s joint with the fact that the reflection over horizontal and vertical planes are ambient isometries.

\subsection{The Heisenberg group} \label{subsec:heisenberg} The Heisenberg group $\Nil=\E(0,\frac{1}{2})$ is a Lie group whose Lie algebra consists of the upper-triangular nilpotent $3\times 3$ real matrices. It can be modeled by $\R^3$, endowed with the metric
\[
\mathrm{d}s^2 = \mathrm{d}x^2 + \mathrm{d}y^2 + \bigl(\tfrac{1}{2}(y\mathrm{d}x - x\mathrm{d}y)  + \mathrm{d}z\bigr)^2,
\]
where $(x, y,z)$ are the usual coordinates of $\R^3$. The projection $\Pi:\Nil \rightarrow \R^2$ given by $\Pi(x, y, z) = (x, y)$ is a Riemannian submersion and $\partial_z$ is a unit vertical Killing vector field.

All vertical and horizontal geodesics in $\Nil$ are Euclidean straight lines, not necessarily linearly parametrized. Moreover, every non-vertical Euclidean plane is minimal and any two of them are congruent by an ambient isometry. Vertical Euclidean planes are also minimal in $\Nil$.

\section{Spherical helicoids and their correspondent sister surfaces}\label{sec:spherical-helicoids}

In this section we are going to illustrate the construction method. For that purpose, we focus on the Berger sphere case, i.e.\ we are going to work in $\s^3_b(4H^2 + \epsilon, H)$ and analyse what the correspondent to the so-called \emph{spherical helicoids} are. The latter form a $1$-parameter family of well known minimal immersions in the round $3$-sphere, introduced by Lawson in~\cite{Lawson70} and given by 
\[
\begin{split}
\Phi_c: \R^2 &\rightarrow \s^3\subset\C^2 \\
(x, y) &\mapsto \bigl( \cos(x) e^{icy}, \sin(x) e^{iy} \bigr).
\end{split}
\]

All these immersions are minimal in $\s^3_b(\kappa, \tau)$, for any $\kappa$ and $\tau$. In fact, they are the only immersions in the $3$-sphere that are minimal with respect to all the Berger metrics (cf.\ \cite[Proposition 1]{Tor10a}).

\begin{remark}\label{rmk:spherical-helicoids}~
\begin{enumerate}[(1)]
	\item We can restrict the parameter $c$ to the interval $[-1,1]$ since the surfaces $\Phi_{1/c}$ and $\Phi_c$ are congruent up to a reparametrization, i.e. $(L\circ \Phi_{1/c})(\frac{\pi}{2}-x, cy) = \Phi_c(x, y)$, where $L(z, w) = (w, z)$.
	\item $\Phi_0:]0,\pi[\times [0, 2\pi] \rightarrow \s^3$ is the minimal sphere (except two points), which is embedded. On the other hand, if $c\in\mathbb Q$, then the immersion $\Phi_c$ is induced to a torus. Moreover,  $\Phi_1$ is the Clifford torus, and $\Phi_0$ and $\Phi_1$ are the only embedded spherical helicoids since $\Phi_c(\frac{\pi}{2}, \frac{2\pi}{c}) = \Phi_c(\frac{\pi}{2}, \frac{2\pi}{c}(1-c))$ and $\frac{2\pi}{c} = \frac{2\pi}{c}(1-c) \pmod{2\pi}$ if and only if $c \in \{0, 1\}$. Observe that a Clifford torus is nothing but the lift by the Hopf projection of a geodesic in $\s^2(\kappa)$. Given $p \in \s^3_b(\kappa, \tau)$ and a horizontal vector $u$ at $p$ there exists a unique Clifford torus passing through $p$ with tangent plane at $p$ orthogonal to $u$.
	\item For every $c$, the surface $\Phi_c$ is invariant by the $1$-parameter group of isometries $t \rightarrow \left(\begin{smallmatrix} e^{ict} & 0 \\ 0 & e^{it} \end{smallmatrix} \right)$.
\end{enumerate}
\end{remark}

We will now focus on the case $c \neq -1$, because the sister surface of the spherical helicoid $\Phi_{-1}$ is of different nature and will be treated in Section~\ref{sec:sister-surfaces}. Let us consider the polygon $\Lambda_c$, $c \neq -1$, consisting of the curves:
\[
\begin{aligned}
h_1(t) &= \bigl(\cos(t), \sin(t)\bigr) = \Phi_c(t, 0), & t &\in \left[0, \tfrac{\pi}{2}\right], \\
h_2(t) &= \bigl(\cos(t)e^{\frac{i\pi c}{2(1+c)}}, \sin(t)e^{\frac{i\pi}{2(1+c)}}\bigr) = \Phi_c\left(t, \tfrac{\pi}{2(1+c)}\right), & t &\in \left[0, \tfrac{\pi}{2}\right], \\
v_1(t) &= (e^{ict}, 0) = \Phi_c(0, t), & t &\in \left[0, \tfrac{\pi}{2(1+c)}\right], \\
v_2(t) &= (0, e^{it}) = \Phi_c\left(\tfrac{\pi}{2}, t\right), & t &\in \left[0, \tfrac{\pi}{2(1+c)}\right]. \\
\end{aligned}
\]
(see Figure~\ref{fig:spherical-helicoids}). It is easy to check that the curve $h_\theta(t) = \Phi_c(t, \theta)$, $\theta \in [0, \tfrac{\pi}{2(1+c)}]$, is a horizontal geodesic and $v_1$, $v_2$ are vertical ones, for all $c$. Moreover, we can recover the whole surface $\Phi_c(\R^2)$ by geodesic reflection of the piece $\Phi_c([0, \tfrac{\pi}{2}]\times[0, \tfrac{\pi}{2(1+c)}])$ across the edges of $\Lambda_c$.

Consider now the sister immersion $\Phi^*_c:[0,\frac{\pi}{2}]\times[0,\tfrac{\pi}{2(1+c)}] \rightarrow M^2(\epsilon) \times \R$ and denote by $h_\theta^*$ and $v_j^*$, $j = 1, 2$, the corresponding curves. In view of Lemma~\ref{lm:simetrias-productos}, $h_\theta^*$ are contained in a vertical plane of symmetry, $P_\theta$, while $v_j^*$ is contained in a slice $M^2(\epsilon) \times \{p_j\}$, $j = 1, 2$. To understand the behavior of these curves, one can compute their curvature as curves in the vertical or horizontal plane they lie in. Let us observe firstly that, since the sister surface intersects the slice and the vertical plane where the curves $h_j^*$ and $v_j^*$ meet orthogonally, the curvatures of these two curves (supposed to be parametrized by arc length) are given by
\[
\begin{split}
k_{v_j^*}^{M^2(\epsilon) \times\{p_j\}} &= \prodesc{S^* (v_j^*)'}{(v_j^*)'} = H - \prodesc{S v_j'}{J v_j'},\\
k_{h_\theta^*}^{P_\theta} &= \prodesc{S^* (h_\theta^*)'}{(h_\theta^*)'} = H - \prodesc{S h_\theta'}{J h_\theta'},
\end{split}
\]
where $S$ and $S^*$ denote the shape operators of $\Phi_c$ and $\Phi^*_c$, respectively. The second identity follows from the last equation in~\eqref{eq:daniel}.

Finally, as we know explicitly the shape operator of $\Phi_c$,  straightforward computations show that $v_j^*$ are constant curvature curves in $M^2(\epsilon) \times \{p_j\}$, $j = 1, 2$. On the other hand, all the curves $h_\theta^*$ have the same curvature since the immersion $\Phi_c$ is invariant by a $1$-parameter group of isometries that transform each $h_{\theta_1}$ into another $h_{\theta_2}$. Hence, every point of $v_j^*$ is contained in a vertical plane of symmetry so the sister surface must be rotationally invariant. 
 
 \begin{proposition}\label{prop:spherical-helicoids}
 	The sister surface of the spherical helicoid $\Phi_c$ is a rotationally invariant surface. More precisely,
 	\begin{enumerate}[(i)]
 		\item The sister surface of the minimal sphere $\Phi_0$ is the constant mean curvature $H$ sphere.
 		\item The sister surface of the Clifford torus $\Phi_1$ is the vertical cylinder, i.e.,\ the product of a constant curvature $2H$ curve of $M^2(\epsilon)$ with the real line.
 		\item The sister surface of $\Phi_c$ for $0 < c < 1$ is an unduloid \cite[Lemma 1.3]{PR99}.
 		\item The sister surface of $\Phi_c$ for $-1 < c < 0$ is a nodoid \cite[Lemma 1.3]{PR99}.
 	\end{enumerate}
 \end{proposition}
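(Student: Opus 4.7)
The argument is essentially completed in the paragraphs preceding the statement: the fundamental piece $\Phi_c^*([0,\tfrac{\pi}{2}]\times[0,\tfrac{\pi}{2(1+c)}])$ meets two parallel slices orthogonally along the curves $v_j^*$ of constant geodesic curvature, and meets a one-parameter family of vertical planes $P_\theta$ orthogonally along the curves $h_\theta^*$ of common constant curvature. Combining Lemma~\ref{lm:simetrias-correspondencia-daniel} with the reflection procedure of Section~\ref{subsec:smooth-extension} yields, through reflection across any $P_\theta$, a vertical plane of symmetry through every point of $v_1^*$ and $v_2^*$. All these planes share a common axis perpendicular to the two slices, so the complete sister surface is invariant under the one-parameter group of rotations about that axis.

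With rotational invariance in hand, the surface is determined by its profile curve in a half-plane. For (i), $c=0$ makes $v_1(t)\equiv(1,0)$ degenerate to a point, so $\Phi_0$ parametrises a closed topological $2$-sphere in $\s^3_b(4H^2+\epsilon,H)$; its sister $\Phi_0^*$ is thus a closed rotational \textsc{cmc} $H$ surface in $M^2(\epsilon)\times\R$ whose axis meets it at a single point, and by the uniqueness of the rotational \textsc{cmc} $H$ sphere this must be the \textsc{cmc} $H$ sphere. For (ii), the immersion $\Phi_1(x,y)=e^{iy}(\cos x,\sin x)$ is invariant under the full Hopf flow of $\s^3_b(4H^2+\epsilon,H)$; since the Daniel correspondence intertwines vertical Killing fields, $\Phi_1^*$ is invariant under vertical translations in $M^2(\epsilon)\times\R$, and together with rotational invariance this forces it to be a vertical cylinder. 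A direct computation using $k_{v_j^*}=H-\prodesc{Sv_j'}{Jv_j'}$ and the explicit shape operator of the Clifford torus then gives $2H$ for the constant geodesic curvature of the cross section.

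For (iii) and (iv), the sister surface is a complete rotational \textsc{cmc} $H$ surface which is neither a sphere nor a cylinder; by the classification of such surfaces in \cite{HH89, PR99} it must belong to the one-parameter family of unduloids and nodoids. To decide which regime corresponds to each sign of $c$, I would glue the arcs $v_1^*$, $h_1^*$, $v_2^*$ into a model of the meridian profile: if the two circular arcs $v_1^*$ and $v_2^*$ lie on the same side of the rotation axis the profile is embedded (unduloid), while if they lie on opposite sides it crosses the axis (nodoid). A sign computation from the explicit parametrisation of $\Phi_c$, tracking the effect of the rotation $e^{\pi J/2}$ on the tangent vectors of $v_1$ and $v_2$ via the identity $T^*=JT$, should show that the two circles lie on the same side for $c\in(0,1)$ and on opposite sides for $c\in(-1,0)$, matching (iii) and (iv) respectively. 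The main obstacle is precisely this orientation-tracking step, since the sign of the signed radius of $v_j^*$ in the rotational picture depends delicately on the ninety degree twist introduced by the Daniel correspondence along the vertical edges of the fundamental polygon $\Lambda_c$.
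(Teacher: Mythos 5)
Your treatment of the main claim and of items (i)--(ii) matches the paper's. Rotational invariance is obtained exactly as in the text (a vertical plane of symmetry through every point of $v_j^*$, all sharing a common vertical axis); the paper dismisses (i) as trivial and settles (ii) by observing that $\nu\equiv 0$ on the Clifford torus and $\nu^*=\nu$, which forces $\xi^*$ to be everywhere tangent to $\Phi_1^*$, hence a vertical cylinder. Your phrasing that the correspondence ``intertwines vertical Killing fields'' is not literally one of the stated properties (only $\nu^*=\nu$ and $T^*=JT$ are), but it reduces to the same $\nu^*\equiv 0$ argument, and your computation of the curvature $2H$ via $k=H-\prodesc{Sv_j'}{Jv_j'}$ is a welcome explicit supplement.

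For (iii)--(iv) there is a concrete problem with your proposed criterion. Every parallel circle of a rotationally invariant surface in $M^2(\epsilon)\times\R$ is centred on the axis, and for both unduloids and nodoids the generating curve stays in the open half-plane $\{r>0\}$; so the dichotomy ``$v_1^*$ and $v_2^*$ on the same side versus opposite sides of the axis'' does not distinguish the two families, and the sign computation you outline would not close the argument even if carried out. The correct invariant, which is what the paper points to (and what Figure~1 depicts via the solid versus dashed contours), lives in the meridian curves $h_\theta^*$: one must analyse their curvature inside the vertical plane $P_\theta$, via $k_{h_\theta^*}^{P_\theta}=H-\prodesc{Sh_\theta'}{Jh_\theta'}$ and the explicit shape operator of $\Phi_c$, and match the resulting profile with the classification of generating curves in \cite[Lemma 1.3]{PR99} (monotone profile for the unduloid, overhanging profile with normal pointing towards the axis at the neck for the nodoid). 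To be fair, the paper itself omits this computation as ``long and straightforward,'' and you honestly flag the step as the main obstacle; but your plan for it should be redirected from the horizontal circles $v_j^*$ to the meridians $h_\theta^*$.
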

 
\begin{figure}[htbp]
\centering
\includegraphics{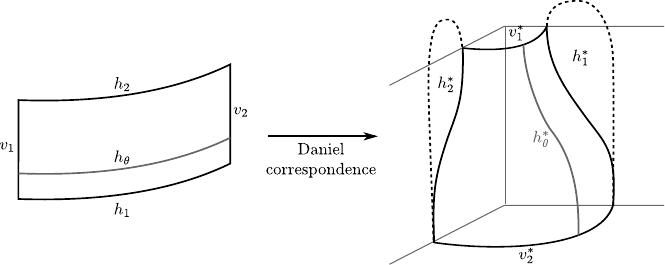}
\caption{Polygon $\Lambda_c$ ($c \neq -1$) in the Berger sphere (left) and its sister contour in $M^2(\epsilon) \times \R$ (right) for $c > 0$ (solid line) and $c < 0$ (dashed line)}\label{fig:spherical-helicoids}
\end{figure}

\begin{proof}
On the one hand, the previous argument shows that the sister surface of $\Phi_c$ must be a rotationally invariant \textsc{cmc} $H$ surface in $M^2(\epsilon) \times \R$. On the other hand, the first assertion is trivial and the second one is easy because the Clifford torus has vanishing constant angle, which remains invariant under the Daniel correspondence. Finally, (iii) and (iv) is a consequence of a deep analysis of the curvature of the curves $h_\theta$, which will be omitted since it is long and straightforward.
\end{proof}

\begin{remark}
The previous argument shows that every minimal surface which is ruled by horizontal geodesics becomes, via the Daniel correspondence, a \textsc{cmc} surface invariant by a $1$-parameter group of isometries.

In the round sphere case (via the Lawson correspondence), the corresponding surfaces to the spherical helicoids are the Delaunay \textsc{cmc} rotationally invariant examples in $\R^3$~\cite[Theorem 2.1]{GB93}.
\end{remark}

\section{Constant mean curvature surfaces in $\s^2 \times \R$ and $\h^2 \times \R$}\label{sec:cmc-surfaces-products}

In this section we will construct, for each $H > 0$ (resp.\ $H>1/2$), a $1$-parameter family of constant mean curvature $H$ surfaces in $\s^2 \times \R$ (resp.\ $\h^2 \times \R$). We will first build a 1-parameter family of minimal surfaces $\Sigma_\lambda$ in the Berger sphere $\s^3_b(4H^2+\epsilon, H)$, $\epsilon \in \{-1,1\}$, by solving the Plateau problem over an appropriate geodesic polygon $\Gamma_\lambda$ (see figure~\ref{fig:gamma-lambda}). The desired surfaces will be the corresponding \textsc{cmc} $H$ surfaces.

\subsection{Geodesic polygons}\label{subsec:geodesic-polygons} Let us consider $\lambda\in[0,\frac{\pi}{2}]$ a real parameter and define the geodesic polygon $\Gamma_\lambda$, explicitly parametrized as
\[
\begin{aligned}
h_0(t) &= \frac{1}{\sqrt{2}}(e^{it}, e^{-it}), & t &\in \left[0, \tfrac{\pi}{2}\right] ,\\
h_1(t) &= (\cos t, \sin t),  &t &\in \left[\tfrac{\lambda}{2}, \tfrac{\pi}{4}\right],\\
h_2(t) &= (i\cos t, i\sin t), & t &\in \left[-\tfrac{\pi}{4}, \tfrac{\lambda}{2}\right],\\
v(t) &= \left( e^{it}\cos \tfrac{\lambda}{2} , e^{it} \sin \tfrac{\lambda}{2}\right), &t&\in \left[0, \tfrac{\pi}{2}\right].
\end{aligned}
\]
Notice that $h_0$, $h_1$ and $h_2$ are horizontal geodesics which project by $\Pi$ on two orthogonal great circles ($h_1$ and $h_2$ in the same one), whilst $v$ is a vertical one (none of them are arc-length parametrized). In fact, the dependence on $\lambda$ lies in where we choose the point to split the geodesic and do the horizontal lift (see Figure~\ref{fig:gamma-lambda}). Observe that two consecutive curves of $\Gamma_\lambda$ meet at a $\frac{\pi}{2}$ angle.

\begin{figure}[htbp]
\centering
\includegraphics{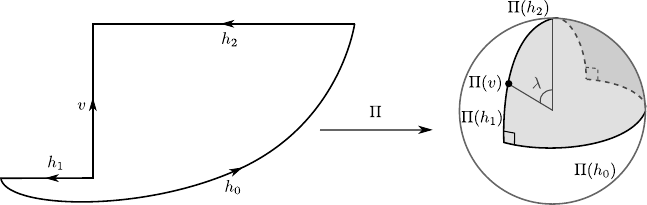}
\caption{Polygon $\Gamma_\lambda$ (left) and  its Hopf projection $\Pi(\Gamma_\lambda)$ (right). The parameter $\lambda$ represents the marked angle.}\label{fig:gamma-lambda}
\end{figure}

In the sequel, let us consider $W=\Pi^{-1}(\hat W)$, where $\hat{W}$ is the convex compact domain in $\s^2(4H^2 + \epsilon)$ bounded by $\Pi(\Gamma_\lambda)$, which is nothing but a quarter of the sphere. Thus $W$ is a solid torus whose boundary is made out of two pieces of Clifford tori (cf.\ Remark \ref{rmk:spherical-helicoids}) which meet at a $\frac{\pi}{2}$ angle.

\begin{proposition}\label{prop:existencia-Plateau-gamma-lambda}
There exists a unique minimal surface $\Sigma_\lambda\subset W$ with border $\Gamma_\lambda$. Moreover, the interior of $\Sigma_\lambda$ is a graph over $\hat W$ and can be extended smoothly across its boundary.
\end{proposition}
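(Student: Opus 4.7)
The plan is to follow the standard recipe of conjugate Plateau constructions: produce an embedded least-area disk in the mean-convex region $W$, show that it is a vertical graph over $\hat W$ via the Hopf projection $\Pi$, derive uniqueness from that graph property, and finally apply the reflection principle of Section~\ref{subsec:smooth-extension} to obtain the smooth extension across $\partial\Sigma_\lambda$.

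For \emph{existence} I would observe that $\partial W$ is made up of two pieces of Clifford torus which, by Remark~\ref{rmk:spherical-helicoids}, are minimal in $\s^3_b(4H^2+\epsilon,H)$; hence $W$ is a mean-convex solid torus whose boundary is piecewise minimal. Since $\Gamma_\lambda\subset\partial W$ is a piecewise smooth nullhomotopic Jordan curve with interior corner angles equal to $\pi/2$, a Morrey--Meeks--Yau type argument in the mean-convex barrier $W$ yields an embedded least-area minimal disk $\Sigma_\lambda\subset W$ with $\partial\Sigma_\lambda=\Gamma_\lambda$.

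The \emph{main obstacle} is the graph property, the delicate point being that the vertical side $v$ collapses under $\Pi$ to a single point $p_0\in\partial\hat W$, so $\Pi(\Gamma_\lambda)=\partial\hat W$ is a convex Jordan curve while an entire edge of $\Sigma_\lambda$ lies in the fibre over $p_0$. I would adapt the Rad\'o argument using Clifford tori as barriers: if $\Sigma_\lambda$ failed to be transverse to some vertical fibre at an interior point $q$, then $T_q\Sigma_\lambda$ would contain the Killing direction $\xi_q$ and hence coincide with the tangent plane at $q$ of the Clifford torus $T$ whose Hopf projection is the geodesic of $\s^2(4H^2+\epsilon)$ through $\Pi(q)$ in the horizontal direction of $T_q\Sigma_\lambda$. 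By the convexity of $\hat W$, one can arrange that this geodesic be a supporting one, so that $T$ leaves $\Gamma_\lambda$ on one side; since $T$ is minimal and its interior is disjoint from $\Gamma_\lambda$, the maximum principle forces $\Sigma_\lambda\subset T$, contradicting $\partial\Sigma_\lambda=\Gamma_\lambda$. Thus $\Pi$ is a local diffeomorphism on $\mathrm{int}(\Sigma_\lambda)$, and a monodromy argument based on the fact that $\Pi(\Gamma_\lambda)=\partial\hat W$ bounds the convex disc $\hat W$ forces $\Sigma_\lambda$ to be a graph over $\hat W$.

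\emph{Uniqueness} is then immediate: two solutions would yield two minimal graphs on $\hat W$ with identical boundary data (finite along $\Pi(h_0\cup h_1\cup h_2)$ and $+\infty$ at the single point $p_0$), and a Jenkins--Serrin type maximum-principle argument for the minimal surface equation in $\hat W\subset\s^2(4H^2+\epsilon)$ forces them to coincide. The \emph{smooth extension} across $\Gamma_\lambda$ is handled by Section~\ref{subsec:smooth-extension}: every side of $\Gamma_\lambda$ is horizontal or vertical, so geodesic reflection extends $\Sigma_\lambda$ smoothly across each smooth point of $\Gamma_\lambda$; at each vertex the interior angle is $\pi/2$, so finitely many reflections around the vertex produce a minimal surface which is smooth at the vertex provided it is locally embedded there, and the removable-singularity result cited at the end of Section~\ref{subsec:smooth-extension} (combined with the local embeddedness inherited from the Plateau solution) completes the proof.
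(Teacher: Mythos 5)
Your existence step and the smooth-extension step match the paper's (Meeks--Yau in the mean-convex solid torus $W$, followed by the reflection discussion of Section~\ref{subsec:smooth-extension}), but the central step --- the graph property --- is where your argument breaks down, and it is precisely the step the paper handles by a different method. First, a local error: if $T_q\Sigma_\lambda$ is vertical at an interior point $q$, the Clifford torus $T$ tangent to $\Sigma_\lambda$ at $q$ is \emph{uniquely determined} (Remark~\ref{rmk:spherical-helicoids}), and since $\Pi(q)$ lies in the interior of $\hat W$ (an interior tangency with the minimal boundary $\partial W$ is excluded by the maximum principle), the great circle $\Pi(T)$ passes through an interior point of the convex set $\hat W$. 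You cannot ``arrange'' it to be a supporting geodesic, and $T$ does not leave $\Gamma_\lambda$ on one side, so the conclusion $\Sigma_\lambda\subset T$ does not follow.

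Second, even the standard Rad\'o counting argument (at least four nodal branches of $\Sigma_\lambda\cap T$ emanate from $q$ and must reach $\Gamma_\lambda$, while $\Pi(T)$ meets the convex curve $\partial\hat W$ in at most two points) fails here, because $\Gamma_\lambda$ contains the vertical segment $v$, which is an entire fibre arc: whenever $\Pi(T)$ passes through $\Pi(v)$, the whole of $v$ lies in $T$, so $T\cap\Gamma_\lambda$ is not a finite set and the branch count yields no contradiction. This degenerate edge is exactly the obstruction the paper's proof is built to circumvent: it perturbs $\Gamma_\lambda$ near $v$ into curves $\Gamma_n^{\pm}$ with injective projection, solves the Plateau problem for these to get genuine graphs, passes to monotone limits $\Sigma^{\pm}$ sandwiching $\Sigma_\lambda$, uses stability to rule out a vanishing angle function in the limit, and invokes Pinheiro's maximum principle for minimal graphs in Killing submersions to conclude $\Sigma^{+}=\Sigma^{-}=\Sigma_\lambda$. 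Note also that this sandwich argument applies to \emph{any} minimal surface in $W$ bounded by $\Gamma_\lambda$, which is what gives the full uniqueness claimed in the statement; your Jenkins--Serrin step only compares two surfaces already known to be graphs (and, incidentally, the boundary datum over $\Pi(v)$ is a bounded vertical segment, not $+\infty$). To salvage your approach you would need a separate argument excluding vertical tangencies whose associated great circle passes through $\Pi(v)$, or else adopt the paper's approximation scheme.
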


\begin{proof}
We know that $W$ is a mean convex body and it is clear $\Gamma_\lambda$ is nullhomotopic in $W$, so the existence follows from the results by Meeks and Yau~\cite{MY82}.  Hence, there exists a minimal surface $\Sigma_\lambda$ with border $\Gamma_\lambda$ which is $C^2$ in the interior and $C^0$ in the boundary. 

Notice that, working in the Riemannian universal cover of $W$, the maximum principle for minimal graphs in Killing submersions given by Pinheiro in \cite[Theorem 2.1]{Pinheiro} (see also~\cite[Corollary 3.9, p.\ 100]{ManPhD}) ensures that there exists at most one minimal graph in $W$ with boundary $\Gamma_\lambda$. Therefore, to finish the proof it suffices to show that $\Sigma_\lambda$ is a graph, i.e.\ its angle function $\nu$ never vanishes.

Let us consider $\Gamma$, a small deformation of $\Gamma_\lambda$ in a neighbourhood of the vertical component $v$ such that $\Pi\lvert_{\Gamma}$ is injective and $\Gamma$ lies \emph{above} $\Gamma_\lambda$. Then, by the existence result and a classic application of the maximum principle, there exists a minimal graph $\Sigma$ with border $\Gamma$ and it lies above $\Sigma_\lambda$. We can consider a decreasing sequence $\{\Gamma_n\}_n$ of such deformations (i.e.\ $\Gamma_n$ lies above $\Gamma_{n+1}$ for every $n$) that converge to $\Gamma_\lambda$. The corresponding solutions $\Sigma_n$ to the Plateau problem with border $\Gamma_n$ will be graphs and will converge to a minimal surface $\Sigma^+$ with border $\Gamma_\lambda$ (each of the surfaces $\Sigma_n$ is stable and their geometries are uniformly bounded so standard converge arguments can be applied, note also that the sequence is monotonic). Hence, $\Sigma^+$ is above $\Sigma_\lambda$ and it is a graph. This last property is a consequence of being limit of graphs so its angle function does not change sign: as the angle function is a Jacobi function and $\Sigma^+$ is stable, then is either identically zero or never vanishes. The first case is obviously not possible.

\begin{figure}[htbp]
\centering
\includegraphics{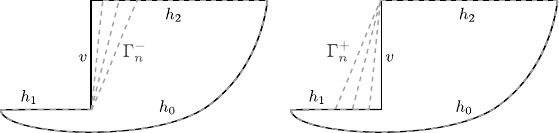}
\caption{Deformations $\Gamma^-_n$ (left) and $\Gamma^+_n$ (right) of the original polygon $\Gamma_\lambda$ whose associated Plateau solutions converge to the minimal graphs $\Sigma^+$ and $\Sigma^-$.}
\end{figure}

Likewise, we can deform the curve $\Gamma_\lambda$ so the new one will be below and construct a minimal graph $\Sigma^-$ with border $\Gamma_\lambda$ below $\Sigma_\lambda$. Finally, Pinheiro's argument shows that $\Sigma^+ = \Sigma^-$. Thus, $\Sigma_\lambda = \Sigma^+$ and, in particular, $\Sigma_\lambda$ is a graph. The smooth extension property follows from Section~\ref{subsec:smooth-extension}.
\end{proof}

\begin{remark}\label{rmk:casos-particulares-Sigma-lambda}~
\begin{enumerate}[(1)]
\item If $\lambda = 0$ or $\lambda = \tfrac{\pi}{2}$ it is easy to check, due to the uniqueness of $\Sigma_\lambda$, that $\Sigma_0$ is a piece of the spherical helicoid $\Phi_{-1}$ (cf.\ Proposition~\ref{prop:existencia-Plateau-gamma-lambda}) and $\Sigma_{\pi/2}$ is a piece of the sphere $\{(z,w) \in \s^3:\, \pRe(z-w) = 0\}$.

\item The deformation technique used in the proof can be applied to show the existence and uniqueness of graphical solutions of the Plateau problem in Killing submersions for a wide family of contours, known as Nitsche contours (see also \cite[Theorem 3.11, p.\ 101]{ManPhD}).
\end{enumerate}
\end{remark}

Now, we will focus on the dependence of the family $\Sigma_\lambda$ on $\lambda$. First, notice that $h_0$ does not depend on $\lambda$ and both $h_1$ and $h_2$ lie in horizontal geodesics which differ on a vertical translation. Thus, working in the universal cover of $W$ and given $0\leq\lambda_1<\lambda_2\leq\frac{\pi}{2}$, the maximum principle applied to $\Sigma_{\lambda_1}$ and $\Sigma_{\lambda_2}$ ensures that they do not intersect except in the common boundary. This fact proves that the family $\{\Sigma_\lambda:0\leq\lambda\leq\frac\pi 2\}$ is vertically ordered with respect to the parameter $\lambda$.

We claim it defines a foliation of the domain $U\subset W$ bounded  by the sphere $\Sigma_{\pi/2}$, the spherical helicoid $\Sigma_{0}$ and the Clifford torus $\Pi^{-1}(\Pi(h_2))$. To prove the claim, it suffices to check that $U=\cup_{\lambda=0}^{\pi/2}\Sigma_\lambda$, so we will prove that there exists no $p_0\in U$ such that it is not contained in any $\Sigma_\lambda$. If this situation occurred then, as the family is vertically ordered, we could define $\lambda_0$ such that $\Sigma_\lambda$ lied in one side of $p_0$ for $\lambda>\lambda_0$ and in the other side for $\lambda<\lambda_0$. Since the involved surfaces are stable, we can take limits for $\lambda\nearrow\lambda_0$ and $\lambda\searrow\lambda_0$. The limit surfaces are minimal and have the same boundary $\Gamma_{\lambda_0}$ so, by uniqueness (cf.\ Proposition~\ref{prop:existencia-Plateau-gamma-lambda}), $p_0$ must lie in $\Sigma_{\lambda_0}$.

Take a point $p \in h_0(]0, \frac{\pi}{2}[) \cup h_1(]0, \frac{\pi}{4}[)$. If $p \in\Gamma_{\lambda_0}$ for some $\lambda_0$, then there exists $\varepsilon > 0$ such that $p \in \Gamma_\lambda$ for all $\lambda \in [0, \lambda_0 + \varepsilon[$. On the other hand, let
$p \in h_2(]-\frac{\pi}{4}, \frac{\pi}{4}[)$. If $p \in \Gamma_{\lambda_0}$, for some $\lambda_0$, then there exists $\varepsilon > 0$ such that $p \in \Gamma_\lambda$ for all $\lambda \in\ ]\lambda_0 - \varepsilon, \frac{\pi}{2}]$. Hence, it makes sense study the function $\lambda \mapsto \nu_\lambda(p)$ for $\lambda$ in the appropriate interval, which is the purpose of the following lemma.

We choose a unit normal vector field $N$ to $\Sigma_\lambda$ so the angle function $\nu_\lambda  = \prodesc{N}{\xi}$ is negative in $\Sigma_\lambda$.

\begin{lemma}\label{lm:crecimiento-angulo-frontera}
The angle function $\nu_\lambda$ of the suface $\Sigma_\lambda$ satisfies:
\begin{enumerate}[(i)]
	\item $\nu_\lambda(p) = 0$ for $p \in \Gamma_\lambda$ if and only if $p \in v([0,\frac{\pi}{2}])$.

	\item $\nu_\lambda$ does not take the value $-1$ in  $\Sigma_\lambda$, \label{lm:crecimiento-angulo-frontera:item:cojonudo} and it only takes that value in $\Gamma_\lambda$ at $h_0(0)$ and $h_0(\tfrac{\pi}{2})$ for $0 < \lambda < \tfrac{\pi}{2}$.
	\item If $p\in h_0(]0, \frac{\pi}{2}[)\cup h_1(]0, \frac{\pi}{4}[)$,  the function $\lambda\mapsto\nu_\lambda(p)$ is continuous and strictly increasing.
	\item If $p\in h_2(]-\frac{\pi}{4}, \frac{\pi}{4}[)$, the function $\lambda\mapsto\nu_\lambda(p)$ is continuous and strictly decreasing.
\end{enumerate}

\end{lemma}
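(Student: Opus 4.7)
For the ``if'' direction: $v$ is a vertical geodesic contained in $\Sigma_\lambda$, so its tangent $v'$ is parallel to $\xi$ and therefore $\xi \in T\Sigma_\lambda$ along $v$, whence $\nu_\lambda = \langle N,\xi\rangle$ vanishes identically on $v$. For the converse, let $p \in h_i$ ($i \in \{0, 1, 2\}$) with $p \notin v$ and $p$ distinct from the two corners $h_0(0), h_0(\pi/2)$. Subsection~\ref{subsec:smooth-extension} lets us extend $\Sigma_\lambda$ smoothly across $h_i$ by geodesic reflection, so $p$ lies in the interior of a larger minimal surface. A direct computation shows that the differential of reflection across a horizontal geodesic sends $\xi$ to $-\xi$ and, once the normal is chosen by continuity, also sends $N$ to $-N$, so $\nu_\lambda$ is preserved and remains $\leq 0$ on the extension. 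Since $\nu_\lambda$ satisfies a Jacobi-type equation $\Delta \nu_\lambda + q\nu_\lambda = 0$ on this extension, the strong maximum principle would force $\nu_\lambda \equiv 0$ near $p$ if $\nu_\lambda(p)=0$, contradicting the graph property, which guarantees $\nu_\lambda < 0$ in $\mathrm{int}(\Sigma_\lambda)$.

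\textbf{Part (ii).} Two horizontal geodesics $h_0$ and $h_1$ meet orthogonally at $h_0(0)= h_1(\pi/4) \in \Sigma_\lambda$. Their tangent vectors there span the horizontal $2$-plane, so this must be the tangent plane of $\Sigma_\lambda$, forcing $N = \pm\xi$; the sign convention $\nu_\lambda < 0$ then gives $\nu_\lambda = -1$. The same argument applies at $h_0(\pi/2)= h_2(-\pi/4)$. To rule out $\nu_\lambda(p) = -1$ elsewhere, reduce $p$ to an interior point of a smooth extension as in (i). There $1 + \nu_\lambda \geq 0$ satisfies a linear elliptic equation and does not vanish identically (indeed $\nu_\lambda = 0$ on $v$), so the strong maximum principle forbids $(1 + \nu_\lambda)(p) = 0$ at an interior point.

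\textbf{Parts (iii)--(iv).} Continuity of $\lambda \mapsto \nu_\lambda(p)$ follows from the smooth dependence of $\Sigma_\lambda$ on $\lambda$, implicit in the foliation argument preceding the lemma. For strict monotonicity, fix $\lambda_1 < \lambda_2$ in the admissible range for $p$. The vertical ordering already established says $\Sigma_{\lambda_2}$ lies strictly on one prescribed side of $\Sigma_{\lambda_1}$ off their common boundary. Locally near $p$, express both surfaces as graphs $u_1, u_2$ over a common neighborhood of $\Pi(p)$ in the appropriate domain; since $p$ lies on a common edge of $\Gamma_{\lambda_1}$ and $\Gamma_{\lambda_2}$, one has $u_1 = u_2$ on the boundary arc through $\Pi(p)$ and, say, $u_1 \leq u_2$ on one side. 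Hopf's boundary point lemma applied to the linear equation satisfied by $u_2 - u_1$ (the linearization of the minimal graph equation about $u_1$) yields a strict inequality between the inward normal derivatives of $u_1$ and $u_2$ at $\Pi(p)$. Since $\nu_\lambda$ is a monotonic function of the slope of the graph at a boundary point, this translates into the desired strict monotonicity. The direction is determined by which side of $v$ the edge through $p$ lies on: $h_0$ and $h_1$ yield increase, while $h_2$ yields decrease. The main obstacle is to verify carefully the direction of the vertical ordering along each edge of $\Gamma_\lambda$, since the domains $\hat{W}_\lambda$ themselves depend on $\lambda$; once this is settled, the Hopf lemma delivers the strict inequality automatically.
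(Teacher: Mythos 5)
Your arguments for (i), (iii) and (iv) are essentially sound. For (i) you take a different route from the paper: you reflect across the edge and apply the strong maximum principle to $\nu_\lambda\le 0$ (legitimate after splitting the zeroth-order coefficient of the Jacobi operator into its positive and negative parts), whereas the paper observes that $\nu_\lambda(p)=0$ at an interior point of $h_i$ would make $\Sigma_\lambda$ tangent at $p$ to the Clifford torus $\Pi^{-1}(\Pi(h_i))\subset\partial W$, contradicting the boundary maximum principle. For (iii)--(iv), your combination of the vertical ordering of the family with the Hopf boundary point lemma is the paper's argument, modulo the verification of the direction of the ordering that you yourself flag as outstanding.

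The genuine gap is in (ii). You claim that $1+\nu_\lambda\ge 0$ ``satisfies a linear elliptic equation'' to which the strong maximum principle applies. It does not, in any usable sense: $\nu_\lambda$ satisfies the Jacobi equation $\Delta\nu_\lambda+q\nu_\lambda=0$ with $q=|A|^2+\mathrm{Ric}(N)$, so $w=1+\nu_\lambda$ satisfies the \emph{inhomogeneous} equation $\Delta w+qw=q$, and $q$ has no sign (it is typically positive in a Berger sphere). Equivalently, $u=-\nu_\lambda$ satisfies $\Delta u=-qu$, which near a point where $u=1$ is merely a superharmonicity statement when $q>0$; a nonnegative solution may attain the interior value $1$ without being constant. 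The trick that works for the nodal set of $\nu_\lambda$ in part (i) has no analogue for the level set $\{\nu_\lambda=-1\}$, essentially because there is no minimal surface in $\E(4H^2+\epsilon,H)$ with $\nu\equiv-1$ to compare against. The paper's proof of (ii) is genuinely different and global: if $\nu_\lambda(p)=-1$ at an interior point, the minimal sphere $\Lambda$ swept out by the horizontal geodesics through $p$ is tangent to $\Sigma_\lambda$ at $p$; the intersection $\Lambda\cap\Sigma_\lambda$ then contains at least two curves through $p$ which, by the maximum principle, cannot bound a compact region and must reach $\partial\Sigma_\lambda$ in at least four points, while an explicit analysis shows $\Lambda$ meets $\partial\Sigma_\lambda$ in at most two points. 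Moreover, the assertion that $-1$ is attained on $\Gamma_\lambda$ only at $h_0(0)$ and $h_0(\tfrac{\pi}{2})$ is deduced in the paper from (iii)--(iv) together with the known behaviour of $\nu$ on the extremal surfaces $\Sigma_0$ and $\Sigma_{\pi/2}$; your reduction of boundary points to interior points of a reflected extension also collapses once the interior argument fails.
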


\begin{proof}
To prove \emph{(i)}, if $\nu_\lambda(p)=0$ for some $p\in\Gamma_\lambda$ not lying in $v$, then $p\in h_i$ for some $i\in\{0,1,2\}$. Moreover, $p$ must lie in the interior of the curve $h_i$ since, at its vertices, the angle function either has value$-1$ or $p$ also lies in $v$. Thus, the piece of Clifford torus given by $\Pi^{-1}(\Pi(h_i))$ is tangent to $\Sigma_\lambda$ at $p$ and we get a contradiction to the boundary maximum principle.

Next we prove \emph{(ii)} by contradiction. Let us suppose that there is an interior point $p\in\Sigma_\lambda$ such that $\nu_\lambda(p)=-1$, and consider the surface $\Lambda$ consisting of the horizontal geodesics passing through $p$, which is in fact a minimal sphere, tangent to $\Sigma_\lambda$ at $p$. Then, the intersection $\Lambda\cap\Sigma_\lambda$ forms a system of differentiable curves which meet transversely at some points (where both surfaces are tangent). Thus, at the point $p$ at least two of them meet, but they cannot enclose a compact region (due to the maximum principle) so they necessarily die in $\partial\Sigma_\lambda$ (notice that in the Berger spheres, the umbrella $\Lambda$ is a sphere). If we prove that $\Lambda$ intersects $\partial\Sigma_\lambda$ in two points at most, the contradiction will be clear. On the one hand, showing that $\Lambda$ cannot intersect the vertical boundary twice is an explicit computation and, on the other hand, if $\Lambda$ intersected the horizontal boundary twice, as $\Lambda$ is made out of horizontal geodesics starting at $p$, we would find a closed horizontal geodesic polygon, which projects one-to-one by the Hopf projection, and this is impossible.

Items \emph{(iii), (iv)} hold since the family $\{\Sigma_\lambda: 0 \leq \lambda \leq \frac{\pi}{2}\}$ is vertically ordered and foliates the domain $U$, as we showed before. Finally the second part of \emph{(ii)} follows from \emph{(iii)}, \emph{(iv)} and the well-known behavior of the angle function on $\Sigma_0$ and $\Sigma_{\pi/2}$ along their boundary. To be more precise, let us distinguish two cases:
\begin{itemize}
 \item If $p\in h_2(]-\frac{\pi}{4}, \frac{\pi}{4}[)$, we compare with $\Sigma_{\pi/2}$. Since $\nu_\lambda$ is strictly decreasing along this border, we get that $\nu_\lambda(p)>\nu_{\pi/2}(p) > -1$.
 
 \item If $p\in h_0(]0, \frac{\pi}{2}[) \cup h_1(]0, \frac{\pi}{4}[)$, we compare with $\Sigma_0$. As $\nu_\lambda$ is strictly increasing along this border, we get that $\nu_\lambda(p)>\nu_0(p)\geq -1$.\qedhere
\end{itemize}
\end{proof}

\subsection{Properties of the conjugate surface}\label{sec:sister-surfaces}

\noindent In this section, we will consider the \textsc{cmc} $H$  sister surfaces in $\s^2 \times \R$ (for $H>0$) or $\h^2 \times \R$ (for $H>1/2$) corresponding to $\Sigma_\lambda$, which will be denoted by  $\Sigma_\lambda^*$. 

First of all, recalling Section~\ref{subsec:smooth-extension}, we know that $\Sigma_\lambda$ can be extended to a simply connected minimal surface in such a way $\overline\Sigma_\lambda$ lies in its interior so the Daniel correspondence may be applied to the extended surface and it provides an isometry between $\overline\Sigma_\lambda$ and $\overline\Sigma_\lambda^*$ by restriction. This property guarantees that the lengths of the components of the boundary and the angles they make are preserved. We will denote by $h_0^*$, $h^*_1$, $h_2^*$ and $v^*$ the corresponding curves of the boundary of $\Sigma_\lambda^*$. In view of Lemma~\ref{lm:simetrias-productos}, the curves $h_j^*$, $j\in\{0,1,2\}$, are contained in vertical planes $P_j$ and the surface meets these planes orthogonally. Hence, the angle that $P_i$ and $P_j$ make is the same as that the curves $h_i^*$ and $h_j^*$ make, which is in turn the same that $h_i$ and $h_j$ make, so $P_0$ is orthogonal to $P_1$ and $P_1$ is orthogonal to $P_2$. On the other hand, the curve $v^*$ is contained in a horizontal slice, that will be supposed to be $M^2(\epsilon)\times\{0\}$ after a vertical translation. Hence, we can extend the piece $\Sigma^*_\lambda$ to a complete \textsc{cmc} H surface in $M^2(\epsilon) \times \R$. 

We will first analyse the extremal cases $\lambda=0$ and $\lambda=\frac{\pi}{2}$. The surface $\Sigma_{\pi/2}$ is a part of a minimal sphere in $\s^3_b(4H^2 + \epsilon,H)$ (cf.\ Remark~\ref{rmk:casos-particulares-Sigma-lambda}) and so $\Sigma_{\pi/2}^*$ must be a piece of the \textsc{cmc} $H$ rotationally sphere in $M^2(\epsilon) \times \R$. On the other hand, the surface $\Sigma_{0}$ satisfies $\nu=1$ along the horizontal geodesic $h_0$. Thus, $h_0^*$ has constant height and it is contained in a vertical plane so it must be a horizontal geodesic in $M^2(\epsilon)\times\R$. Moreover, $\Sigma_0$ is foliated by horizontal geodesics (i.e.\ $t\mapsto (e^{i\theta}\cos t, e^{-i\theta}\sin t)$, $\theta \in [0,\pi/2]$), orthogonal to the boundary curve $v$. Thus, $\Sigma^*_0$ is foliated by curves $\gamma_t$ satisfying that:
\begin{itemize}
 \item $\gamma_t$ connects the point $h_0^*(t)$ to a point in $v^*$.
 \item $\gamma_t$ is contained in a vertical plane, orthogonal to $h_0^*$ and $v^*$.
 \item The curves $\Pi\circ\gamma_t$ satisfy $\|(\Pi\circ\gamma_t)'\|=-\nu$, so all of them have the same length.
\end{itemize} 
Thus, the curve $v^*$, which is contained in a horizontal slice, must be a curve equidistant to $\Pi\circ h_0^*$ in $M^2(\epsilon)$. In particular, $v^*$ has constant geodesic curvature in $M^2(\epsilon)$. From the parametrization of $\Sigma_0$ given in Section~\ref{sec:spherical-helicoids}, it is easy to show that such geodesic curvature is given by $\kappa_g=\frac{\epsilon}{2H}.$

In view of \cite[Thm.\ 4.2]{Man10}, the surface $\Sigma_0^*$ must be a part of a rotationally invariant torus if $\epsilon=1$ or a part of a horizontal cylinder, invariant under hyperbolic translations, if  $\epsilon=-1$.

\begin{theorem}\label{thm:unduloids}
Given $\epsilon\in\{-1,1\}$ and $H>0$ with $4H^2+\epsilon>0$, there exists a $1$-parameter family $\{S_\lambda(H):\, \lambda \in [0, \pi/2]\}$ of complete constant mean curvature $H$ surfaces in $M^2(\epsilon) \times \R$. All of them have a horizontal plane of symmetry and they are invariant under a discrete $1$-parameter group of isometries acting trivially on the factor $\R$ (consisting of rotations if $\epsilon = 1$, or hyperbolic translations if $\epsilon = -1$). Furthermore:
	\begin{enumerate}[(i)]
		\item $S_0(H)$ is the \textsc{cmc} $H$ rotationally invariant torus (resp. cylinder) in $\s^2\times \R$ (resp. $\h^2 \times \R$) and $S_{\pi/2}(H)$ is the \textsc{cmc} $H$ sphere.
		\item If $\epsilon=-1$, all the surfaces in the family are embedded.
		\item If $\epsilon = 1$ then for each $H < 1/2$ there exists $\lambda^*$ such that for all $\lambda \geq \lambda^*$ the surfaces $S_\lambda(H)$ are not embedded.
	\end{enumerate}
Moreover, the maximum height of the surface varies continuously between the maximum height of the upper half of the horizontal cylinder and the maximum height of the hemisphere.
\end{theorem}

\begin{remark}\label{rmk:unduloids}~
\begin{enumerate}[(1)]
	\item In the case of $\s^2\times\R$, the $1$-parameter group is generated by a rotation of angle $2\ell(\lambda)$, whilst in $\h^2\times\R$ it is generated by a hyperbolic translation of length $2\ell(\lambda)$, where $\ell(\lambda) = -\int_0^{\pi/2}\nu_\lambda(h_0(t)) \df t$ is the length of the projection of $h_0^*$ to the slice $M^2(\epsilon)\times\{0\}$ (see Figure~\ref{fig:estrella}).

	\item We conjecture that the fundamental piece of the \textsc{cmc} $H$ surface in $\s^2\times \R$ is embedded for all values of $\lambda\in[0,\pi/2]$. Thus, for a suitable choice of $\lambda$, they will produce embedded \textsc{cmc} tori, different from the rotationally invariant one $S_0(H)$.
\end{enumerate}
\end{remark}

\begin{proof}\label{rmk:altura-monotona}
The previous reasoning ensures that we can extend the surface $\Sigma^*_\lambda$ to a complete \textsc{cmc} $H$ surface in $M^2(\epsilon)\times \R$ that we will denote by $S_\lambda(H)$, and it is clear that the extended surface is invariant by the mentioned group of isometries. We also know that $S_0(H)$ is the rotationally invariant torus (resp. cylinder) and $S_{\pi/2}(H)$ is the \textsc{cmc} sphere in $\s^2\times \R$ (resp.\ $\h^2 \times \R$).

In the case of $\h^2 \times \R$, since the minimal surface $\Sigma_\lambda$ is a graph over a convex domain (see Proposition~\ref{prop:existencia-Plateau-gamma-lambda}), the conjugate surface $\Sigma_\lambda^*$ is a graph over a certain domain of $\h^2$ by the generalized Krust's theorem given by~\cite{CH13}. Moreover, we claim that the surface $\Sigma_\lambda^*$ is contained in the region limited by the vertical planes $P_0$, $P_1$ and $P_2$ and the slice $\h^2 \times \{0\}$, which is equivalent to prove that the curve $v^*$ is contained in such region. This follows from the fact that the length of $v^*$ does not depend on $\lambda$, and its geodesic curvature $\kappa_g$ (computed as a curve of the slice $\h^2 \times \{0\}$ with respect to its normal vector field pointing outside the domain of the graph) satisfies the lower bound $\kappa_g \geq -(1+4H^2)/4H$ (i.e., it is bounded by the geodesic curvature of the equator of the \textsc{cmc} sphere of the same mean curvature, see~\cite[Theorem 3.3]{Man10}). Reasoning by contradiction, if the curve $v^*$ were not contained in the aforementioned region, it can be shown that the estimate above forces the length of the curve $v^*$ to be bigger than it is allowed to be. Finally, the complete surface $S_\lambda(H)$ is  embedded since the reflected fundamental regions do not intersect each other.

	On the other hand, we analyze the case of $\s^2 \times \R$, focusing on the length $a(\lambda) = -\int_{-\pi/4}^\lambda \nu_\lambda(h_2(t))\df t$ of the projection to $\s^2\times \{0\}$ of the curve $h_2^*$ (see Figure~\ref{fig:estrella}). The function $a(\lambda)$ is strictly increasing by Lemma~\ref{lm:crecimiento-angulo-frontera}.(iv) and takes all the values in between the corresponding length of the rotationally torus $\arctan(1/2H)$ and the sphere $2\arctan(1/2H)$. Now, if $H < 1/2$, then there exists a unique $\lambda^*$ such that $a(\lambda^*)=\pi/2$, so $a(\lambda)>\pi/2$ for all $\lambda\in[\lambda^*,\pi/2[$. For these values of $\lambda$ the complete surface $S_\lambda(H)$ has a self-intersection around the north pole of the sphere (we have consider $\Pi(h_0^*)$ as the equator in $\s^2 \times \{0\}$), so it is not embedded.

It is also clear that the maximum height must be attained at a point with $\nu_\lambda = -1$. Lemma~\ref{lm:crecimiento-angulo-frontera} ensures that this only happens at $h_0^*(0)$ and $h_0^*(\tfrac{\pi}{2})$. In the case of the horizontal cylinder, both points are at the same height and in the case of the sphere is trivial that the maximum height is attained at $h_0^*(\tfrac{\pi}{2})$. We are going to prove that, for $0<\lambda<\frac{\pi}{2}$, the maximum height is attained at $h_0^*(\tfrac{\pi}{2})$. 

Let us consider $h^*_i$ for $i\in\{0,1,2\}$ and write $h_i^*=(\beta_i,r_i)\in M^2(\epsilon)\times\R$. As $h_i^*$ is contained in a vertical plane which $\Sigma_\lambda^*$ meets orthogonally, it is easy to check that $|r'_i(t)|^2=1-\nu_\lambda(h^*_i(t))^2$ and $\|\beta_i'(t)\|=-\nu_\lambda(h_i^*(t))$. Since the angle function does not take the value $-1$ in the interior of $h_i^*$, we deduce that $r_i'\neq0$ along $h^*_i$, i.e., the height function is strictly monotonic along $h_i^*$ for $i\in\{0, 1, 2\}$. In particular, we deduce that the points $h_0^*(0)$ and $h_0^*(\frac{\pi}{2})$ does not have the same height. Moreover, taking this into account, the height $\mu_{\pi/2}(\lambda)$ of the point $h_0^*(\frac{\pi}{2})$ in $\Sigma_\lambda^*$ is given by
\[
\mu_{\pi/2}(\lambda) = \int_{-\pi/4}^{\lambda/2} \sqrt{1 - \nu_\lambda\bigl( h_2^*(t)\bigr)^2} \df t.
\]
In particular, $\mu_{\pi/2}(\lambda)$ is a continuous function of $\lambda$. Now, in view of Lemma~\ref{lm:crecimiento-angulo-frontera}, if $\lambda_1 < \lambda_2$, then
\[
\begin{split}
\mu_{\pi/2}(\lambda_1) &= \int_{-\pi/4}^{\lambda_1/2} \sqrt{1 - \nu_{\lambda_1}\bigl( h_2^*(t)\bigr)^ 2} \df t <\int_{-\pi/4}^{\lambda_2/2} \sqrt{1 - \nu_{\lambda_1}\bigl( h_2^*(t)\bigr)^ 2} <\\
&< \int_{-\pi/4}^{\lambda_2/2} \sqrt{1 - \nu_{\lambda_2}\bigl( h_2^*(t)\bigr)^2} \df t = \mu_{\pi/2}(\lambda_2).
\end{split}
\]
Hence, the height of the point $h_0^*(\tfrac{\pi}{2})$ is strictly increasing in $\lambda$. Finally, we get that $\mu_{\pi/2}(\lambda) \in\ ]\mu_{\pi/2}(0), \mu_{\pi/2}(\pi/2)[$ for every $\lambda \in\ ]0, \frac{\pi}{2}[$.

On the other hand, a similar argument shows that $\mu_0(\lambda)$, i.e.\ the height of the point $h_0^*(0)$ in $\Sigma_\lambda^*$, is a continuous strictly decreasing function of $\lambda$ and so $\mu_0(\lambda) \in\ ]0,\mu_0(0)[\ =\ ]0, \mu_{\pi/2}(0)[$ for $0 < \lambda < \frac{\pi}{2}$. Hence, the maximum height is attained at $h_0^*(\frac{\pi}{2})$ and it is between the height of the upper half of the horizontal cylinder and the height of the hemisphere.
\end{proof}

The properties shown in the proof allow us to make a quite precise depiction of the polygon $\Gamma_\lambda^*$, as can be seen in Figure \ref{fig:estrella}. It is important to observe that no information is obtained about $v^*$ apart from the fact that it is contained in a horizontal plane so the representation may not be exact.

\begin{figure}[htb]
 \centering
 \begin{minipage}[c]{0.5\textwidth}
 \includegraphics[width=\textwidth]{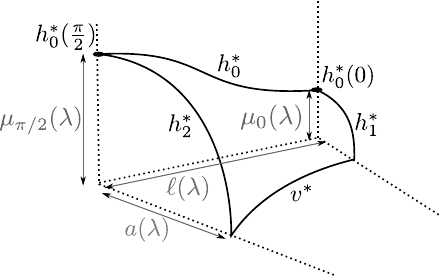}
\end{minipage}\quad\quad
\begin{minipage}[c]{0.4\textwidth}
 \includegraphics[width=\textwidth]{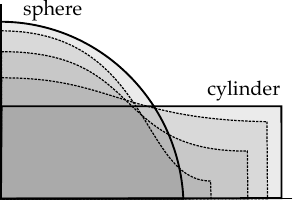}
\end{minipage} 
 \caption{Representation of the polygon $\Gamma_\lambda^*$ (left), where the dotted lines represent geodesics in $M^2(\epsilon)\times\R$, and a sketch of the profiles of the projections of $\Sigma^*_\lambda$ to the vertical plane containing $h_0^*$ (right). Note that the height of the cylinder is a half of the height of the sphere}\label{fig:estrella}
\end{figure}

Aledo, Espinar and Gálvez proved in~\cite{AEG08} that if $\Sigma\subseteq M^2(\epsilon)\times\R$ is a constant mean curvature $H>0$ graph over a compact open domain, with $4H^2+\epsilon>0$, whose boundary lies in the slice $M^2(\epsilon)\times\{0\}$, then $\Sigma$ can reach at most the height of the hemisphere, and equality holds if, only if, the surface is a rotationally invariant hemisphere. The construction above provides examples where the height varies between the height of the horizontal cylinder and the sphere. Those examples are not compact in general but in the case of $\s^2\times\R$ we get some compact ones.

\begin{corollary}
The family $\{S_\lambda(H):\lambda\in\; ]0, \pi/2[\}$ provides many compact constant mean curvature $H > 0$ in $\s^2\times\R$, whose maximum heights are dense between the height of the horizontal cylinder and the sphere. 
\end{corollary}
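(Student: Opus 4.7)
The plan is to characterize when the extended surface $\widetilde{\Sigma}_\lambda^*$ from Theorem~\ref{thm:unduloids} is compact in terms of the function $\ell(\lambda)$ of Remark~\ref{rmk:unduloids}, to prove that the set of such parameters is dense in $[0,\pi/2]$, and finally to transfer this density to the maximum-height parameter via the strictly monotonic function already studied in the proof of that theorem.

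First I would observe that, by Theorem~\ref{thm:unduloids} and Remark~\ref{rmk:unduloids}, the complete surface $\widetilde{\Sigma}_\lambda^*$ has bounded height and is invariant under the cyclic group generated by the rotation of $\s^2\times\R$ of angle $2\ell(\lambda)$. Since a fundamental piece $\Sigma_\lambda^*$ is relatively compact, the whole $\widetilde{\Sigma}_\lambda^*$ is compact if and only if this group is finite, equivalently if and only if $\ell(\lambda)/\pi\in\mathbb{Q}$.

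The next step is to analyse the function $\ell:[0,\pi/2]\to\R$. Continuity follows from the continuous dependence of the family $\{\Sigma_\lambda\}$ on $\lambda$, and hence of $\nu_\lambda$ restricted to the fixed curve $h_0$. Strict monotonicity is the crucial point: by Lemma~\ref{lm:crecimiento-angulo-frontera}(iii), for each $t\in\ ]0,\pi/2[$ the map $\lambda\mapsto\nu_\lambda(h_0(t))$ is strictly increasing, so for $\lambda_1<\lambda_2$ the integrand $-\nu_{\lambda_1}(h_0(t))$ strictly dominates $-\nu_{\lambda_2}(h_0(t))$ on an open set, and integration yields $\ell(\lambda_1)>\ell(\lambda_2)$. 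Since $\nu_0\equiv -1$ along $h_0$ (as $\Sigma_0$ is a piece of the spherical helicoid $\Phi_{-1}$, cf.\ Remark~\ref{rmk:casos-particulares-Sigma-lambda}), we have $\ell(0)=\pi/2$, and strict monotonicity gives $\ell(\pi/2)<\pi/2$. Thus $\ell$ is a homeomorphism of $[0,\pi/2]$ onto the non-degenerate interval $[\ell(\pi/2),\pi/2]$.

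The conclusion is now formal. Since $\pi\mathbb{Q}$ is dense in $\R$, the set $\ell^{-1}(\pi\mathbb{Q})$ is dense in $[0,\pi/2]$. The maximum-height function $\mu_{\pi/2}:[0,\pi/2]\to[\mu_{\pi/2}(0),\mu_{\pi/2}(\pi/2)]$, shown continuous and strictly increasing in the proof of Theorem~\ref{thm:unduloids}, is itself a homeomorphism onto the interval ranging from the height of the horizontal cylinder to that of the hemisphere; it therefore maps the dense set $\ell^{-1}(\pi\mathbb{Q})$ onto a dense subset of that height interval, and each such $\lambda$ yields a compact \textsc{cmc} $H$ bi-multigraph (bi-multigraphness being already part of the conclusion of Theorem~\ref{thm:unduloids}) with the corresponding maximum height. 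The only delicate point is the strict monotonicity of $\ell$, which is an immediate integral consequence of the pointwise monotonicity in Lemma~\ref{lm:crecimiento-angulo-frontera}(iii), so no substantial obstacle is expected.
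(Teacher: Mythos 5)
Your proposal is correct and follows essentially the same route as the paper: characterize compactness by $\ell(\lambda)\in\pi\mathbb{Q}$, use continuity and strict monotonicity of $\ell$ to get a dense set of admissible parameters, and transfer density to the heights via the monotone function $\mu_{\pi/2}$. Your appeal to Lemma~\ref{lm:crecimiento-angulo-frontera}(iii) for the strict monotonicity of $\ell$ is in fact the appropriate citation (the paper points to item (i), which appears to be a slip), and the extra details you supply (the value $\ell(0)$, the explicit density transfer) are consistent with, and merely flesh out, the paper's terser argument.
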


\begin{proof}
Let us consider the function $\ell:[0,\frac{\pi}{2}]\rightarrow\R$ defined in Remark~\ref{rmk:unduloids}. This function is nothing but the length of the geodesic segment $\Pi(h^*_0)\subseteq M^2(\epsilon)$ (see Figure~\ref{fig:estrella}). Due to Lemma~\ref{lm:crecimiento-angulo-frontera}.(i) this function is continuous and strictly decreasing. Now, it is clear that, by successively reflecting the piece $\Sigma_\lambda^*$, the obtained surface is compact if, and only if, $\ell(\lambda)$ is a rational multiple of $\pi$ so the corollary is proved.
\end{proof}

\begin{remark}\label{rmk:compact-cmc-1-loop-close}
In fact, by analyzing more deeply the arguments used in the proof, it can be shown that, for $H\geq 1/2$, it is possible to choose the parameter $\lambda$ so the resulting compact surface closes the first time it goes all the way round the equator. Indeed, the suitable choice for $\lambda$ is that one for which $\ell(\lambda)=\frac{\pi}{k}$ for some integer $k\geq 2$.
\end{remark}

\section{Constant mean curvature $1/2$ surfaces in $\h^2\times\R$}
\label{sec:cmc-1/2-H2xR}

This last section is devoted to construct \textsc{cmc} $1/2$ surfaces in $\h^2\times\R$ which have the symmetries of a tessellation of $\h^2$ by regular polygons. As we pointed out in the introduction, these surfaces arise from conjugation of minimal surfaces in the Heisenberg space. Recall that \textsc{cmc} $0 < H < 1/2$ surfaces in $\h^2 \times \R$ are obtained by the same procedure from minimal surfaces in the simplectic group $\Sl$, though this case is not considered in this paper. Let us introduce some notation to study that problem.

\begin{lemma}\label{lemma:teselacion}
Given $m,k\in\N$,  there exists a tessellation of $\h^2$ by regular $m$-gons such that  $k$ of them meet at each vertex if, and only if,
\[\frac{1}{m}+\frac{1}{k}<\frac{1}{2}.\]
Furthermore, such a tessellation is unique up to an isometry of $\h^2$. We will call it a $(m,k)$-tessellation.
\end{lemma}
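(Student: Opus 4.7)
The plan is to parametrize regular $m$-gons in $\h^2$ by their circumradius and translate the geometric fitting condition into the stated arithmetic inequality, and then to appeal to a Poincaré-type reflection argument for existence of the tessellation and to rigidity of regular polygons for uniqueness.

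First I would fix $m\geq 3$ and, for each $r>0$, consider the regular geodesic $m$-gon $P(r)\subset\h^2$ whose vertices lie on a circle of hyperbolic radius $r$ centered at a fixed point $O$, equally spaced at angles $2\pi/m$. By the $SO(2)$-symmetry about $O$, $P(r)$ is well-defined up to an ambient isometry, all its sides have equal length and all its interior angles are equal to some value $\alpha(r)\in(0,\pi)$. Using the hyperbolic law of cosines in one of the $m$ congruent isoceles triangles $OV_iV_{i+1}$ cut out by the radii, one obtains an explicit formula for $\alpha(r)$ in terms of $r$ and $m$; from this (or from a purely comparison-geometric argument) $\alpha$ depends continuously and strictly monotonically on $r$, with the limits
\[
\lim_{r\to 0^+}\alpha(r)=\frac{(m-2)\pi}{m},\qquad \lim_{r\to\infty}\alpha(r)=0,
\]
the first limit following from the Euclidean approximation for small polygons and the second from the fact that the vertices tend to the ideal boundary, making $P(r)$ ideal with zero interior angles.

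For $k$ copies of $P(r)$ to fit without overlap and without gap around a common vertex, the interior angle must equal $2\pi/k$. By the intermediate value theorem applied to the continuous strictly decreasing function $\alpha$, such an $r$ exists if and only if
\[
0<\frac{2\pi}{k}<\frac{(m-2)\pi}{m},
\]
which is exactly the inequality $\tfrac{1}{m}+\tfrac{1}{k}<\tfrac{1}{2}$. This gives the necessity and yields a unique circumradius $r=r(m,k)$.

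Once the fundamental polygon $P=P(r(m,k))$ is constructed, I would build the tessellation by applying Poincaré's polygon theorem: the group $\Gamma$ generated by the geodesic reflections across the sides of $P$ is a discrete subgroup of $\mathrm{Isom}(\h^2)$, the translates $\{\gamma(P):\gamma\in\Gamma\}$ have pairwise disjoint interiors, and they cover $\h^2$; the angle condition $\alpha=2\pi/k$ is precisely the cycle condition at each vertex that makes the theorem applicable, so no overlaps occur and the orbit fills $\h^2$. For uniqueness, any other $(m,k)$-tessellation has a face which is a regular $m$-gon with interior angle $2\pi/k$; since a regular $m$-gon in $\h^2$ is determined up to isometry by its interior angle (equivalently by its circumradius, by the monotonicity of $\alpha$), an ambient isometry carries that face to $P$, and the tessellation property forces this isometry to send the whole tiling to the one constructed.

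The main obstacle is the existence part of the Poincaré polygon theorem, which rests on verifying that the vertex cycle condition holds exactly, and on showing that no non-trivial identifications of sides occur beyond those prescribed by the reflections; in the present regular situation both verifications reduce to the equality $\alpha(r(m,k))=2\pi/k$ together with the transitive action of the rotational symmetry of $P$ on its sides, so the argument runs through without further difficulty.
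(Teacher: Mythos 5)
The paper offers no proof of this lemma at all: it is stated as a classical fact, immediately followed by the observation that each tile triangulates into $2m$ triangles with angles $\tfrac{\pi}{k}$, $\tfrac{\pi}{2}$, $\tfrac{\pi}{m}$. So your argument has to stand on its own, and its core does. Parametrizing regular $m$-gons by circumradius, the interior angle $\alpha(r)$ decreases continuously and strictly (most cleanly via Gauss--Bonnet: $\alpha(r)=\tfrac{1}{m}\bigl((m-2)\pi-\mathrm{Area}\,P(r)\bigr)$) from $(m-2)\pi/m$ to $0$, so the value $2\pi/k$ is attained, for a unique $r$, exactly when $\tfrac{1}{m}+\tfrac{1}{k}<\tfrac{1}{2}$. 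That gives necessity, the candidate tile, and the key input to uniqueness.

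The one step that does not go through as written is the appeal to Poincar\'e's polygon theorem for the group generated by reflections in the sides of $P$. In its reflection-group form that theorem requires every interior angle of the fundamental polygon to be of the form $\pi/n$ with $n\in\N$; your angle is $2\pi/k$, which has that form only when $k$ is even. When $k$ is odd, the two reflections at a vertex generate a dihedral group of order $2k$ rather than $k$, the vertex cycle transformation is not the identity but a reflection stabilizing $P$, and $P$ is not a fundamental domain for the group --- so the theorem as usually stated does not apply, and the ``no overlap, no gap'' conclusion is exactly the point left unjustified. The standard repair is the triangulation the paper itself mentions right after the lemma: apply Poincar\'e's theorem to the $(2,m,k)$ triangle with angles $\tfrac{\pi}{2}$, $\tfrac{\pi}{m}$, $\tfrac{\pi}{k}$ (all of the admissible form $\pi/n$, and such a triangle exists precisely under the stated inequality), obtain the triangle reflection group and its tessellation, and recover the $m$-gon tiling by grouping the $2m$ triangles around each vertex of angle $\tfrac{\pi}{m}$. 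With that substitution --- plus the small observation, needed in your uniqueness step, that in an edge-to-edge tessellation by regular $m$-gons adjacent tiles share a side length and hence all tiles are congruent, so every vertex angle really is $2\pi/k$ --- the proof is complete.
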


Observe that each polygon in such a tessellation can be triangulated in $2m$ triangles whose angles are $\frac{\pi}{k}$, $\frac{\pi}{2}$ and $\frac{\pi}{m}$. Hence, we will construct a \textsc{cmc} 1/2 piece in the product of the triangle and the real line which will orthogonally meet the vertical planes passing through the sides of the triangles, as well as the slices where the triangle lies in (see Figure~\ref{fig:tesela}). This will be achieved by choosing an appropriate geodesic polygon in $\Nil$ and using the conjugate Plateau construction.

Given $\ell>0$ and $0<\alpha<\frac{\pi}{2}$, let us consider the geodesic polygon in $\Nil=\R^3$ given by
\[
\begin{aligned}
h_0(t) &= (\ell,t\ell\cot\alpha,\tfrac{1}{2}t\ell^2\cot\alpha), & t &\in \left[0,1\right] ,\\ 
h_1(t) &= (t\ell ,0,0),  &t &\in \left[0, 1\right],\\
h_2(t) &= (t\ell,t\ell\cot\alpha,\tfrac{1}{2}\ell^2\cot\alpha), & t &\in \left[0,1\right],\\
v(t) &= (0,0,\tfrac{1}{2}t\ell^2\cot\alpha), &t&\in \left[0, 1\right].
\end{aligned}
\]
This polygon is nothing but the horizontal lift of a triangle whose angles are $\frac{\pi}{2}$, $\alpha$ and $\frac{\pi}{2}-\alpha$, where a vertical segment $v$ has been placed in the vertex with angle $\frac{\pi}{2}-\alpha$. The boundary of the polygon lies in the boundary of a mean-convex body, namely, the lift of the whole triangle in $\R^2$ whose boundary consists of three vertical (and thus minimal) planes (cf.\ Subsection~\ref{subsec:heisenberg}).

\begin{figure}[htbp]
\centering
\includegraphics{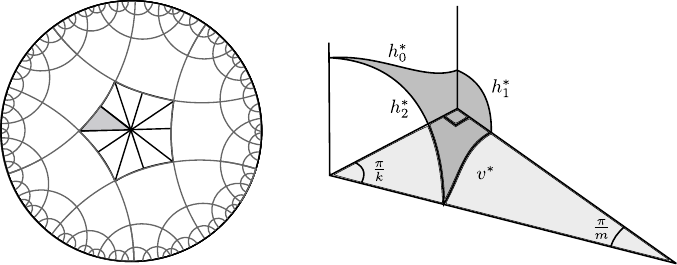}
\caption{A $(m,k)$-tessellation of $\h^2$  for $m=5$ and $k=4$ (left) and the fundamental piece of the \textsc{cmc} surface (right), that fits in the shaded triangle. }\label{fig:tesela}
\end{figure}
Following the ideas developed in previous sections, it is possible to solve the Plateau problem for this polygon in $\Nil$ and we obtain a minimal quadrilateral which is also a graph over an Euclidean right triangle in the plane $z=0$. The angle determined by $h_0$ and $h_2$ is equal to $\alpha$ whereas the three remaining angles are equal to $\frac{\pi}{2}$. If we apply the Daniel correspondence, it provides a \textsc{cmc} $1/2$ quadrilateral in $\h^2\times\R$ for which the corresponding edges $h_i^*$, $i\in\{0,1,2\}$, lie in vertical planes $V_i$ in such a way that $V_0$ and $V_2$ meet with angle $\alpha$ and $V_0$ and $V_1$ are orthogonal.

Nevertheless, we would like the planes $V_0$, $V_1$ and $V_2$ to fit the triangle constructed in the tessellation of $\h^2$, for what we choose $\alpha=\frac{\pi}{k}$. Next step will consist in estimating the length of $\Pi(h_0^*)\subset\h^2$ when varying the parameter $\ell>0$. Notice that that length is the integral of the angle function of $\Sigma^*$ along $h_0^*$ and the angle function is preserved by the correspondence.

\begin{lemma}\label{lemma:tessellation}
In the construction above, the length of the segment $\Pi(h_0^*)\subset\h^2$ diverges when $\ell\to\infty$ (for fixed $\alpha\, \in]0,\pi/2[$).
\end{lemma}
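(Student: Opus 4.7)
The plan is to reduce the claim to a direct estimate on the angle function $\nu$ of $\Sigma_\ell\subset\Nil$ along the horizontal geodesic $h_0$. Since the Daniel correspondence preserves $\nu$ and $h_0^*$ lies in a vertical plane that $\Sigma_\ell^*$ meets orthogonally, decomposing the unit velocity of $h_0^*$ in that plane shows that its horizontal component has length $|\nu|$ at each point; hence $\mathrm{length}(\Pi(h_0^*))=\int_{h_0}|\nu|\,ds$, where $ds$ is the Nil arc length on $h_0$, and it suffices to prove that this integral diverges. Writing $\Sigma_\ell$ as the graph $\{z=u(x,y)\}$ over the Euclidean right triangle $T$ with vertices $(0,0)$, $(\ell,0)$, $(\ell,\ell\cot\alpha)$ (with boundary values $u=0$ on $\Pi(h_1)$, $u(\ell,y)=\ell y/2$ on $\Pi(h_0)$ and $u\equiv\tfrac12\ell^2\cot\alpha$ on $\Pi(h_2)$), the angle function of a Nil graph is $|\nu|=(1+(u_x+y/2)^2+(u_y-x/2)^2)^{-1/2}$; parameterizing $h_0$ by its $y$-coordinate (which coincides with the Nil arc length) and using the tangential identity $u_y(\ell,y)=\ell/2$, this becomes $|\nu(\ell,y)|=1/\sqrt{1+(u_x(\ell,y)+y/2)^2}$ along $h_0$.

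The core step is to bound $|u_x(\ell,y)+y/2|$ from above along $h_0$ by $\ell\cot\alpha-y$. For this I would exhibit two explicit minimal graphs in $\Nil$ that match $u$ on $h_0$ and sandwich it on $T$: the subsolution $\underline{u}(x,y):=\ell y-xy/2$ and the supersolution $\tilde u(x,y):=xy/2+\ell\cot\alpha\,(\ell-x)$. A direct substitution shows that both lie in the two one-parameter families $\pm xy/2+\mathrm{affine}$ of linear minimal graphs in $\Nil$, and an edge-by-edge check of the boundary values yields $\underline{u}\leq u\leq\tilde u$ on $\partial T\setminus\{(0,0)\}$ with equality along $h_0$. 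Combining the comparison principle for the minimal surface equation on $T\setminus B_\varepsilon((0,0))$ with the maximum-principle bound $0\leq u\leq\tfrac12\ell^2\cot\alpha$ and letting $\varepsilon\to 0$ extends these inequalities to all of $T$. The resulting inward normal derivatives along $h_0$ give $\tilde u_x(\ell,y)=y/2-\ell\cot\alpha\leq u_x(\ell,y)\leq -y/2=\underline{u}_x(\ell,y)$, so $u_x(\ell,y)+y/2\in[y-\ell\cot\alpha,0]$ and $|u_x(\ell,y)+y/2|\leq\ell\cot\alpha-y$.

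Substituting this bound into the formula for $|\nu|$ and changing variables $t=\ell\cot\alpha-y$ then yields
\[
\mathrm{length}(\Pi(h_0^*))=\int_0^{\ell\cot\alpha}|\nu(\ell,y)|\,dy\geq\int_0^{\ell\cot\alpha}\frac{dy}{\sqrt{1+(\ell\cot\alpha-y)^2}}=\arcsinh(\ell\cot\alpha)\longrightarrow\infty
\]
as $\ell\to\infty$, which is the lemma. The main subtlety I foresee is the justification of the comparison across the singular vertex $(0,0)$, where $u$ has the jump matched by the vertical segment $v$; this is handled cleanly by the limit argument above, since $\underline{u}(0,0)=0$ and $\tilde u(0,0)=\ell^2\cot\alpha$ straddle the two one-sided limits $0$ and $\tfrac12\ell^2\cot\alpha$ of $u$ at that vertex, which is precisely the condition making the comparison on $T\setminus B_\varepsilon((0,0))$ work uniformly as $\varepsilon\to 0$.
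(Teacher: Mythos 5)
Your proof is correct, and it follows the same overall strategy as the paper --- trap the minimal graph between two explicit minimal barriers containing $h_0$ and compare angle functions along that edge --- but with a genuinely different, and in fact sharper, choice of barriers. The paper uses the two Euclidean (hence minimal) planes $P:\ell y-2z=0$ and $Q:\ell\cot\alpha\,x-\ell y+2z=\ell^2\cot\alpha$, deduces $\nu\leq\max\{\nu_P,\nu_Q\}<0$ along $h_0$, and invokes the divergence of $\int_{h_0}\lvert\nu_P\rvert$ and $\int_{h_0}\lvert\nu_Q\rvert$. Your barriers $\underline{u}=-xy/2+\ell y$ and $\tilde u=xy/2+\ell\cot\alpha(\ell-x)$ are indeed minimal (one caveat of precision: among the perturbations $\pm xy/2+ax+by+c$ only those of the form $xy/2+ax+c$ and $-xy/2+by+c$ solve the graph equation, but your two functions are of this restricted form), they satisfy the stated boundary inequalities edge by edge, and they have the extra virtue that along $h_0$ their tilts $u_x+y/2$ equal $0$ and $y-\ell\cot\alpha$, both nonpositive; the sandwich therefore pins $u_x(\ell,y)+y/2$ into $[\,y-\ell\cot\alpha,0]$ and yields the clean bound $\lvert\nu\rvert\geq(1+(\ell\cot\alpha-y)^2)^{-1/2}$, whose integral $\arcsinh(\ell\cot\alpha)$ visibly diverges. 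This is a genuine improvement over the paper's barriers: along $h_0$ one has $\lvert\nu_P\rvert=(1+\tfrac14y^2)^{-1/2}$ and $\lvert\nu_Q\rvert=(1+\tfrac14(\ell\cot\alpha-y)^2)^{-1/2}$, so the wedge estimate, read literally, only gives $\lvert\nu\rvert\geq\min\{\lvert\nu_P\rvert,\lvert\nu_Q\rvert\}=(1+\tfrac14\max\{y,\ell\cot\alpha-y\}^2)^{-1/2}$, and the integral of that minimum over $[0,\ell\cot\alpha]$ stays bounded (by $4$) as $\ell\to\infty$; the divergence of each plane's integral separately does not force $\int\lvert\nu\rvert$ to diverge, and your choice of barriers repairs exactly this point. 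Your remaining steps --- the identity $\mathrm{length}(\Pi(h_0^*))=\int_{h_0}\lvert\nu\rvert\,ds$ (stated in the paper just before the lemma), the fact that $y$ is the $\Nil$ arc-length parameter on $h_0$, the tangential identity $u_y(\ell,y)=\ell/2$ killing the $\beta$-component, and the excision of a small disc around the singular vertex using $0\leq u\leq\tfrac12\ell^2\cot\alpha$ to justify the comparison --- are all correct and standard.
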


\begin{proof}
On the one hand, let us consider $P$ to be the surface given by the equation $z=\frac{1}{2}(2\ell-x)y$. Then $P$ is a minimal graph (it is congruent to $z=\frac{xy}{2}$ by an ambient isometry), contains the horizontal geodesics $h_0$ and $h_1$ and it is a horizontal surface (i.e., $\nu_P=-1$) along $h_0$. Moreover, a direct application of the maximum principle yields that $\Sigma$ lies above $P$ and hence the angle function of $\Sigma$ satisfies $-1<\nu<0$ in the interior of $h_0$. 

On the other hand, let $Q\subset\Nil$ be the image of the plane $\{z = 0\}$ by a translation that sends the origin to the intersection of $h_0$ and $h_2$. Again the maximum principle guarantees that $\Sigma$ lies in a region bounded by $P$ and $Q$. Since $\nu>-1$, it is possible to compare the angle functions of $\Sigma$ and $Q$ along $h_0$, concluding that $-1<\nu\leq\nu_Q<0$ in the interior of $h_0$. Moreover, the integral of $\nu_Q$ along $h_0$ can be computed explicitly to show that it diverges when $\ell\to\infty$, which forces the integral of $\nu$ along $h_0$ (i.e., the length of $\Pi(h_0^*)\subset\h^2$) to diverge, and the proof is finished.
\end{proof}

As the length of $\Pi(h_0^*)\subset\h^2$ is a continuous function of $\ell$, we deduce that it takes all positive values. In particular Lemma~\ref{lemma:tessellation} implies the construction of the fundamental piece for any regular tessellation. 

\begin{theorem}\label{thm:tiling-surface}
Given a $(m,k)$-tessellation of $\h^2\times\{0\}$, there exists a constant mean curvature $1/2$ bi-multigraph in $\h^2\times\R$  with boun\-ded height, invariant under any isometry of $\h^2\times\R$ which preserves the tessellation. 
\end{theorem}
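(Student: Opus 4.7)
My plan is to carry out the conjugate Plateau construction sketched in the paragraphs preceding the theorem, filling in the two remaining points: matching the hyperbolic triangle exactly, and extending the fundamental piece to a complete surface with the desired symmetries and bounded height. Fix integers $m,k$ with $\frac{1}{m}+\frac{1}{k}<\frac{1}{2}$ and let $T\subset\h^2$ be a right triangle with angles $\frac{\pi}{2}$, $\frac{\pi}{k}$, $\frac{\pi}{m}$ obtained from the $(m,k)$-tessellation by joining a center of a polygon, the midpoint of one of its sides and one adjacent vertex. Let $L_0>0$ denote the hyperbolic length of the side of $T$ opposite to the $\frac{\pi}{m}$-angle; our target is to produce a CMC $\frac12$ piece whose projection to $\h^2$ is exactly $T$.

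Set $\alpha=\frac{\pi}{k}$ and consider, for each $\ell>0$, the geodesic polygon described above in $\Nil$, lying on the boundary of the mean-convex prism over the Euclidean triangle $\Pi(h_1\cup h_2)$. By the same Meeks--Yau/Plateau argument used in Proposition~\ref{prop:existencia-Plateau-gamma-lambda} (using that the three vertical walls are minimal, hence mean convex, and that the polygon is nullhomotopic in the prism) there exists a minimal disk $\Sigma_\ell$ with this boundary, and the deformation technique of that same proof, together with Pinheiro's maximum principle for graphs in Killing submersions, shows that $\Sigma_\ell$ is a graph over its Euclidean projection; hence $\Sigma_\ell$ extends smoothly across its boundary by the results of Section~\ref{subsec:smooth-extension}. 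Applying the Daniel correspondence and Lemma~\ref{lm:simetrias-productos} we obtain a CMC $\frac12$ disk $\Sigma_\ell^*\subset\h^2\times\R$ whose boundary consists of three horizontal curves $h_i^*$ lying in vertical planes $V_i$ and one curve $v^*$ lying in a horizontal slice, meeting the respective totally geodesic planes orthogonally. Because the Daniel correspondence is an isometry, the angles between consecutive boundary components are preserved: $V_0\cap V_2$ has angle $\alpha=\frac{\pi}{k}$ and the other two interior angles at the vertex of $v^*$ are $\frac{\pi}{2}$.

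To match $T$ exactly it remains to prove that the function $\ell\mapsto L(\ell):=\mathrm{length}(\Pi(h_0^*))$ attains the value $L_0$. Continuity of $L$ follows from continuous dependence of the unique graphical Plateau solution on the boundary data, and thus on $\ell$, together with the formula $L(\ell)=-\int_{h_0}\nu\,\mathrm{d}s$ (the angle function being preserved by Daniel). Lemma~\ref{lemma:tessellation} gives $L(\ell)\to\infty$ as $\ell\to\infty$, and as $\ell\to 0^+$ the polygon collapses to a point so $L(\ell)\to 0$; by the intermediate value theorem there is $\ell^\star>0$ with $L(\ell^\star)=L_0$. Fixing such $\ell^\star$, the projection $\Pi(\Sigma_{\ell^\star}^*)$ is a topological disk in $\h^2$ whose boundary consists of two geodesic segments meeting at angle $\frac{\pi}{k}$ and a third curve $\Pi(v^*)$; an additional vertical plane $V_1'$ orthogonal to $V_0$ and completing the triangle $T$ is forced by the angle and length conditions at its vertices, so after a rigid motion we may assume $V_0,V_1,V_2$ bound exactly the triangle $T$.

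Finally, I extend $\Sigma_{\ell^\star}^*$ by successive reflections across the vertical planes $V_0$, $V_1$, $V_2$ (each such reflection is an ambient isometry and the surface meets the plane orthogonally, so the extension is smooth by the Schwarz-type argument in Section~\ref{subsec:smooth-extension}) and across the horizontal slice containing $v^*$. The reflections across $V_0,V_1,V_2$ generate the full symmetry group of the $(m,k)$-tessellation acting on $\h^2\times\R$, so a finite number of reflections around each interior vertex close up to a $2\pi$ angle; at such vertices the extended surface is locally embedded (because each piece is a graph and the pieces meet only along the reflection axes by the maximum principle), so Choi--Schoen's removable singularity theorem cited in Section~\ref{subsec:smooth-extension} makes the surface smooth there. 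The horizontal reflection across the slice produces the bi-multigraph structure, and boundedness of the height is immediate: the height of $\Sigma_{\ell^\star}^*$ is bounded by the extremal values attained along its boundary (points where $\nu=\pm1$), which is a finite quantity, and every reflected copy has the same height since all reflections preserve the height function up to sign. The hardest step is the last one: controlling that the repeated reflections close up to an embedded (or at worst locally embedded) surface at the interior vertices of the tessellation, which relies on the angles at the boundary being exactly the fractions of $\pi$ dictated by the tessellation and on the removable singularity result.
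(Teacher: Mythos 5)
Your proposal is correct and follows essentially the same route as the paper: solve the Plateau problem in $\Nil$ over the lifted right triangle with angle $\alpha=\pi/k$, conjugate via the Daniel correspondence, use Lemma~\ref{lemma:tessellation} together with continuity (and the collapse as $\ell\to 0^+$) to match the length of $\Pi(h_0^*)$ to the corresponding side of the tessellation triangle, and then extend by reflections across the vertical planes and the horizontal slice. You supply several details the paper leaves implicit (the intermediate-value argument, smoothness at the vertices via the removable-singularity discussion of Section~\ref{subsec:smooth-extension}, and the height bound), and apart from the slightly garbled remark about an ``additional plane $V_1'$'' --- the correct point being simply that a hyperbolic triangle is determined by one side and its two adjacent angles, so the third angle is forced to be $\pi/m$ --- the argument matches the paper's.
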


\smallskip
\begin{remark}
The fundamental piece of the surface is known to be embedded by the results in~\cite{CH13} but some issues appear to conclude that the whole surface remains embedded after reflecting such a piece. Our conjecture is that all constructed surfaces are properly embedded bigraphs.
\end{remark}

Finally, we will apply this result to study \textsc{cmc} $1/2$ surfaces when considering some quotients of the hyperbolic plane rather than the plane itself. Let us take a regular $2m$-gon ($m\geq 2$) in the hyperbolic plane and suppose we can identify some of its sides in pairs to obtain a compact surface in such a way there exists a positive integer $k\geq 3$ such that the vertices of the polygon are identified in classes of $k$ elements each. We will call it a \emph{regular gluing pattern}. Then, the surface and the identifications can be carried out in a $(2m,k)$-tessellation of the hyperbolic plane (observe that $\frac{1}{2m}+\frac{1}{k}<\frac{1}{2}$ so Lemma \ref{lemma:tessellation} can be applied), which shows a way to endow the resulting surface with a metric of constant curvature $-1$ whose universal Riemannian cover is the hyperbolic plane (see \cite[Section 1.3]{Thu} for a more detailed description). Gauss-Bonnet formula implies that the resulting surfaces have negative Euler characteristic. Let us illustrate this situation with some examples:
\begin{itemize}
 \item Given $g\geq 2$, consider the gluing pattern in a $4g$-gon defined by
\[
a_1b_1a_1^{-1}b_1^{-1}a_2b_2a_2^{-1}b_2^{-1}\cdots a_gb_ga_g^{-1}b_g^{-1}.
\]
All vertices are identified together so it leads to a $(4g,4g)$-te\-sse\-lla\-tion. The obtained surface is a genus $g$ orientable surface.
 \item For $g\geq 3$, consider now the gluing pattern in a $2g$-gon given by
\[
a_1a_1a_2a_2\cdots a_ga_g.
\]
All vertices are identified together again so it leads to a $(2g,2g)$-tessellation. The quotient is  a non-orientable genus $g$ surface.
\end{itemize}
The conditions on $g$ are the geometric restrictions on a surface of genus $g$ to have negative Euler characteristic. Other identifications give rise to the same topological surfaces, but not isometric to these ones.

Suppose now that we have a regular gluing pattern to which we associate a $(2m,k)$-tessellation. By applying Gauss-Bonnet formula to a regular $2m$-gon $P_{(2m,k)}$ in $\h^2$ with interior angles equal to $2\pi/k$, we get
\[
	\int_{P_{(2m,k)}}K_{\h^2}=2\pi\left(1+\frac{2m}{k}-m\right).
\]
If a compact surface $M$ is obtained from $P_{(2m,k)}$ when identifying some of its edges, then it has Euler characteristic $\chi(M)=1+\frac{2m}{k}-m$. Let us now consider the surface $\Sigma^*_{(2m,k)}$ given by Theorem \ref{thm:tiling-surface}. As every symmetry of the tiling is also a symmetry of the surface, its edges can be identified in the same way as those of $P_{(2m,k)}$ when constructing $M$ and it provides a compact \textsc{cmc} $1/2$ surface in the quotient space $M\times\R$. Finally, we will compute the Euler characteristic of $\hat{\Sigma}^*_{(2m,k)}$, the quotient surfaces. Since it consists of $8m$ pieces, each of which coming from the piece $\Sigma$ constructed in $\Nil$, which satisfies $\int_\Sigma K_\Sigma=\pi/k-\pi/2$ (notice that $\Sigma$ is a quadrilateral whose angles are $\pi/2$, $\pi/2$, $\pi/2$ and $\pi/k$), we obtain
\[	\int_{\hat{\Sigma}^*_{(2m,k)}}K_{\hat{\Sigma}^*_{(2m,k)}}=4\pi\left(\frac{2m}{k}-m\right),
\]
so $\chi(\hat{\Sigma}^*_{(2m,k)})=2(\frac{2m}{k}-m)$. 

Now observe that the gluing pattern in $M$ induces other pattern in $\Sigma^*_{(2m,k)}$, which can be seen as a certain polygon whose sides have been identified in pairs and it is easy to realize that $\hat{\Sigma}^*_{(2m,k)}$ is orientable if, and only if, $M$ is orientable. For orientable surfaces, the genus and Euler characteristic satisfy $\chi=2(1-g)$, whereas $\chi=2-g$ is satisfied for the non-orientable case. From this, the following result follows.

\begin{corollary}
Let $M$ a compact Riemannian surface with negative Euler characteristic and constant curvature $-1$ which can be realized by a regular gluing pattern. Then the construction above induces a compact constant mean curvature $1/2$ bi-multigraph $\Sigma$ immersed in $M\times\R$ satisfying:
\begin{enumerate}[(i)]
 \item $\Sigma$ is orientable if, and only if, $M$ is orientable.
 \item If $M$ has genus $g$, then $\Sigma$ has genus $2g$.
\end{enumerate}
\end{corollary}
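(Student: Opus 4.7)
My approach is to apply Theorem~\ref{thm:tiling-surface} to the $(2m,k)$-tessellation induced by the regular gluing pattern realizing $M$, descend the resulting \textsc{cmc} $1/2$ surface to the quotient $M\times\R$, and then deduce the topological invariants by combining the Euler characteristic computation carried out in the paragraph preceding the statement with an orientability argument based on the behaviour of the mean curvature vector.

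First I would verify that the construction descends. The regular gluing pattern presents $M$ as a quotient of a $(2m,k)$-tessellated copy of $\h^2$ by a group $\Gamma$ of hyperbolic isometries preserving the tessellation. The surface $\Sigma^*_{(2m,k)}\subset\h^2\times\R$ produced by Theorem~\ref{thm:tiling-surface} is invariant under every isometry of $\h^2\times\R$ that preserves the tessellation and acts trivially on the $\R$-factor, hence in particular under $\Gamma$ acting by horizontal isometries. Therefore $\Sigma=\Sigma^*_{(2m,k)}/\Gamma$ is a well-defined immersed surface in $(\h^2\times\R)/\Gamma = M\times\R$, and it is compact because $\Sigma^*_{(2m,k)}$ has bounded height and $M$ is compact.

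Next I would analyse the action of $\Gamma$ on $\Sigma^*_{(2m,k)}$ in order to establish (i). Since $\Sigma^*_{(2m,k)}$ has non-zero constant mean curvature, the mean curvature vector singles out a global unit normal $N$, so the surface is orientable. Every $\gamma\in\Gamma$ is an ambient isometry of $\h^2\times\R$ which preserves $\Sigma^*_{(2m,k)}$, and therefore preserves its mean curvature vector; this forces $\gamma_*N = N\circ\gamma$, that is, $\gamma$ never swaps the two sides of $\Sigma^*_{(2m,k)}$. Consequently, $\gamma$ is orientation-preserving as a diffeomorphism of $\Sigma^*_{(2m,k)}$ if and only if it is orientation-preserving as an isometry of $\h^2\times\R$, which in turn happens precisely when $\gamma$ preserves the orientation of $\h^2$. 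Hence all elements of $\Gamma$ act on $\Sigma^*_{(2m,k)}$ preserving orientation if and only if $M = \h^2/\Gamma$ is orientable, yielding the equivalence in (i).

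Finally, (ii) follows from (i) combined with the identity $\chi(\Sigma) = 2\chi(M) - 2$ obtained in the paragraph preceding the corollary (rewriting $\chi(\Sigma)=2(2m/k-m)$ and $\chi(M)=1+2m/k-m$). In the orientable case, $\chi(M)=2-2g$ gives $\chi(\Sigma)=2-4g$, and since $\Sigma$ is orientable by (i), its genus is $2g$. In the non-orientable case, $\chi(M)=2-g$ gives $\chi(\Sigma)=2-2g$, and since $\Sigma$ is non-orientable by (i), its non-orientable genus is again $2g$. I expect the main obstacle to be making the orientability step rigorous, specifically the claim $\gamma_*N=N\circ\gamma$ for every deck transformation $\gamma$: this is where the \emph{non-zero} mean curvature hypothesis becomes essential, since a minimal analogue might admit side-swapping identifications (as happens in M\"obius-type quotients), whereas here the mean curvature vector rigidly selects one side and thereby forces the identifications on $\Sigma^*_{(2m,k)}$ to faithfully mirror those on $M$.
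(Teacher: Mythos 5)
Your proposal is correct and follows the same overall skeleton as the paper: descend the surface of Theorem~\ref{thm:tiling-surface} to $M\times\R$ using its invariance under the symmetries of the tessellation, compute $\chi(\Sigma)=2\chi(M)-2$ via the Gauss--Bonnet count of fundamental pieces (which you cite rather than redo, which is fine), and then read off the genus from the orientable and non-orientable Euler characteristic formulas. The one step where you genuinely diverge is the orientability equivalence (i). The paper argues combinatorially: the gluing pattern on $M$ induces a gluing pattern on $\Sigma^*_{(2m,k)}$ viewed as a polygon with sides identified in pairs, and it asserts that the quotient surface is orientable exactly when $M$ is. You instead use the fact that $H=1/2\neq 0$, so the mean curvature vector selects a global unit normal $N$ which every ambient isometry preserving the surface must respect; hence a deck transformation $\gamma=\hat\gamma\times\mathrm{id}$ acts orientation-preservingly on $\Sigma^*_{(2m,k)}$ precisely when $\hat\gamma$ preserves the orientation of $\h^2$. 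This is a cleaner and more verifiable argument than the paper's ``it is easy to realize,'' and your remark that it exploits $H\neq0$ in an essential way (a minimal analogue could a priori admit side-swapping identifications) is a genuine insight the paper does not make explicit. Two small points you should make explicit to close the argument: the quotient criterion you use (a free, properly discontinuous action on a connected orientable surface yields an orientable quotient iff every element acts orientation-preservingly) requires that $\Sigma^*_{(2m,k)}$ be connected, which holds by the definition of bi-multigraph, and that $\Gamma$ acts freely on it, which follows from freeness of the action on $\h^2\times\R$.
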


\end{document}